\newtheorem{theorem}{Theorem}
\newtheorem{prop}[theorem]{Proposition}
\newtheorem{lemma}[theorem]{Lemma}
\newtheorem{cor}[theorem]{Corollary}
\newtheorem{remark}[theorem]{Remark}
\begin{document}
	\baselineskip 18pt

	\title{\bf Units and Augmentation Powers in Integral Group Rings}

	\author{Sugandha Maheshwary{\footnote{Research supported by DST, India (INSPIRE/04/2017/000897) is gratefully acknowledged.} 
		}
		\\ {\em \small Indian Institute of Science Education and Research, Mohali,}\\
		{\em \small Sector 81, Mohali (Punjab)-140306, India.}
		\\{\em \small email: sugandha@iisermohali.ac.in}
		\and Inder Bir S. Passi {\footnote{Corresponding author}} \\  {\em \small Centre for Advanced Study
			in Mathematics,}\\ {\em \small Panjab University, Chandigarh-160014, India} \\
		{\small \& }\\
		{\em \small Indian Institute of Science Education and Research, Mohali,}\\
		{\em \small Sector 81, Mohali (Punjab)-140306, India.}\\{\em \small email: ibspassi@yahoo.co.in } }
	\date{}
	{\maketitle}
	
	\begin{abstract}\noindent The augmentation powers in an integral group ring $\mathbb ZG$ induce a natural filtration of the unit group of $\mathbb ZG$ analogous to the filtration of the group $G$ given by its dimension series $\{D_n(G)\}_{n\ge 1}$. The purpose of the present  article is to investigate this filtration, in particular, the triviality of its intersection.
		\end{abstract}\vspace{.25cm}
	{\bf Keywords} : integral group rings, augmentation powers, unit group, lower central series, dimension series. \vspace{.25cm} \\
	{\bf MSC2010 : 16S34, 16U60, 20F14, 20K15}

	\section{Introduction}
	
	Given a group $G$, the powers $\Delta^n(G),\ n\geq 1$, of the augmentation ideal $\Delta(G)$ of its integral group ring $\mathbb ZG$ induce a $\Delta$-adic filtration of $G$, namely, the one given by its dimension subgroups defined by setting $$D_n(G)=G\cap (1+\Delta^n(G)),\ n=1,2,3,\ldots$$ The dimension series of $G$ has been a subject of intensive research (see \cite{Pas79}, \cite{GuP87}, \cite{MP09}). This filtration of a group $G$ (group of trivial units in $\mathbb{Z}G$),  suggests its natural extension to the full unit group $\mathcal V(\mathbb ZG)$ of normalized units, i.e., the group of units of augmentation one in $\mathbb ZG$, by setting $$\mathcal V_n(\mathbb ZG)=\mathcal V(\mathbb ZG)\cap(1+\Delta^n(G)),\ n=1,2,3\ldots$$ It is easy to see that $\{\mathcal V_n(\mathbb ZG)\}_{n\ge 1}$ is a central series in $\mathcal V(\mathbb ZG)$ and consequently, for every $n\ge 1$, the $n^\mathrm{th}$ term $\gamma_n(\mathcal V(\mathbb ZG))$ of the lower central series $\{\gamma_n(\mathcal V(\mathbb ZG))\}_{n=1}^\infty$ of $\mathcal V(\mathbb ZG)$ is contained in $\mathcal V_n(\mathbb ZG)$. 
	Thus the triviality of the  {\it $\Delta$-adic residue} of $\mathcal{V}(\mathbb{Z}G)$ $$\mathcal{V}_\omega(\mathbb{Z}G):=\cap_{n=1}^\infty\mathcal{V}_n(\mathbb{Z}G)$$ implies the residual nilpotence of $\mathcal{V}(\mathbb ZG)$, and so, in particular,  motivates its further  investigation. 
	\par\vspace{0.4cm}
The paper is structured as follows: We begin by collecting some results about the above stated filtration in Section 2. In Section 3, we take up the investigation of groups $G$ with $\mathcal{V}_\omega(\mathbb{Z}G)=\{1\}$. Section 4 deals with the groups having trivial $\mathcal D$-residue, i.e., with groups whose dimension series intersect in identity.
	\par\vspace{0.4cm}	
 We now  mention  some of our main results. 
	\par\vspace{0.4cm}
If $G$ is a finite group, then the filtration $\{\mathcal{V}_n(\mathbb{Z}G)\}_{n\geq 1}$ terminates, i.e.,\linebreak $\mathcal{V}_n(\mathbb{Z}G)=\{1\}$, for some $n\geq 1$, if and only if \\
$(i)$ $G$ is an abelian group of exponent $2,\,3,\,4$ or $6$; or\\
$(ii)$ $G=K_8\times E$, where $K_8$ denotes the quaternion group of order $8$ and $E$ denotes an elementary abelian 2-group (Theorem \ref{theo:Finite_Terrminates}). 
Further,  $\mathcal{V}(\mathbb{Z}G)$ has trivial $\Delta$-adic residue if, and only if, \\
$(i)$ $G$ is an abelian group of exponent $6$; or\\
$(ii)$ $G$ is a $p$-group
 (Theorem \ref{Finite_TrivialResidue}).\\
 It is  interesting to compare the constraints obtained on finite groups satisfying the specified conditions.

For an arbitrary group $G$, if $\mathcal{V}(\mathbb{Z}G)$ has trivial $\Delta$-adic residue, then $G$ cannot have an element of order $pq$ with primes $p<q$, except possibly when $(p,\,q)= (2,\,3)$ (Theorem \ref{2,3_only}). Furthermore, if $G$ is a nilpotent group which is 
			$\{2,\,3\}$-torsion-free, then, for $\Delta$-adic residue of $\mathcal{V}(\mathbb{Z}G)$ to be trivial, the torsion subgroup $T$ of $G$ must satisfy one of the following conditions:\\		
		$(i)$  $T=\{1\}$ i.e., $G$ is a torsion-free nilpotent group, or;\\
		$(ii)$ $T$ is a $p$-group with no element of infinite $p$-height (Theorem \ref{nil}).\\		

In addition, if $G$ is abelian, then it turns out that $\mathcal{V}(\mathbb{Z}G)$ possesses trivial \linebreak$\Delta$-adic residue, if and only if $\mathcal{V}(\mathbb{Z}T)$ does (Theorem \ref{abelian}). 
 
 	We briefly examine the class 
 	 $\mathcal C$ of groups $G$ with $\mathcal{V}_\omega(\mathbb{Z}G)=\{1\}$, and prove that a group $G$ belongs to $\mathcal C$ if  all its quotients $G/\gamma_n(G)$ do so (Theorem \ref{nilpotentquotient}), and that this class is closed under discrimination (Theorem \ref{disc}). Finally, we \linebreak examine the groups $G$ which have the property that the dimension series $\{D_{n,\,\mathbb Q}(G)\}_{n\ge 1}$ over the rationals has non-trivial intersection while 
 	 $\{D_{n}(G)\}_{n\ge 1}$, the one over\linebreak integers,  has trivial intersection (Theorem \ref{dimension}).  
\par\vspace{.5cm}
 For basic results on units and augmentation powers in group rings, we refer the reader to \cite{Seh78} and \cite{Pas79}.
 
\section{The filtration $\mathcal{V}_n(\mathbb{Z}G)$}
As mentioned in the introduction, the filtration $\{\mathcal{V}_n(\mathbb{Z}G)\}_{n=1}^{\infty}$ of the group $\mathcal{V}(\mathbb{Z}G)$  of the normalized units is given by $$\mathcal V_n(\mathbb ZG)=\mathcal V(\mathbb ZG)\cap(1+\Delta^n(G)),\ n=1,2,3\ldots$$ and for every $n\geq 1$, 
\begin{equation}\nonumber
\gamma_n(\mathcal V(\mathbb ZG))\subseteq \mathcal V_n(\mathbb ZG),
\end{equation}
where $\gamma_n(\mathcal V(\mathbb ZG))$ denotes the $n^{\mathrm {th}}$ term of the lower central series $\{\gamma_n(\mathcal V(\mathbb ZG))\}_{n=1}^\infty$.\\

In this section, we collect some results about this filtration.

\begin{description}
	\item[(a)] It is well-known that, for every group $G$, the map $$g\mapsto g-1+\Delta^2(G),\ g\in G,$$ induces an isomorphism  $G/\gamma_2(G)\cong \Delta(G)/\Delta^2(G)$. This map extends to  $\mathcal V(\mathbb ZG)$ to yield \begin{equation}\label{v2}
	\mathcal V(\mathbb ZG)=G\mathcal V_2(\mathbb ZG),\quad 
	\mathcal V(\mathbb ZG )/\mathcal V_2(\mathbb ZG)\cong G/\gamma_2(G)\cong \Delta(G)/\Delta^2(G).\end{equation}
	
	\item[(b)]  In case $G$ is an abelian group, then 
	\begin{equation}\label{T2} 
	\mathcal V(\mathbb ZG)=G\oplus 	\mathcal V_2(\mathbb ZG),
	\end{equation}
	and  $\mathcal V_{2}(\mathbb{Z}G)$ is  torsion-free (see \cite{Seh93}, Theorem 45.1).
	
	\item[(c)]  One of the generic constructions of units in $\mathcal{V}(\mathbb{Z}G)$ is that of bicyclic units. For $g,\,h \in G$, define $$u_{g\,,h}:=1+(g-1)h\hat{g},\quad \mathrm{where}  ~\hat{g}=1+g+\cdots+g^{n-1},$$ $n$ being the order of $g$. The unit $u_{g\,,h}$ is called  a {\it{bicyclic unit}} in $\mathbb{Z}G$; it is trivial if, and only if, $h$ normalizes $\langle g\rangle$, and is of infinite order otherwise.
	 Since $u_{g,\,h}=1+(g-1)(h-1)\hat{g}\equiv 1\mod \Delta^2(G),$ it follows that all  bicyclic units 
	 \begin{equation}\nonumber
	u_{g,\,h}\in \mathcal{V}_{2}(\mathbb{Z}G).
	 \end{equation} Moreover, if $g,\,h$ are of relatively prime  orders, then
	 \begin{equation}\label{eq:bicyclic}
u_{g,\,h}\in \mathcal V_\omega(\mathbb ZG),
	 \end{equation}	  for, in that case, $(g-1)(h-1)\in \Delta^\omega(G):=\cap_{n=1}^\infty(\Delta^n(G)).$ 
\end{description}

Another generic construction of units in $\mathcal{V}(\mathbb{Z}G)$ is that of \emph{ Bass units}. Given $g\in G$ of  order $n$ and  positive integers $k$, $m$ such that $k^m\equiv 1  \mod n$, 
$$u_{k,\,m}(g): = \left(1 +g+\ldots+g^{k-1}\right)^m+\frac{1-k^m}{n}\left(1 +g+\ldots+g^{n-1}\right),$$
is a unit which is trivial, i.e., an element of $G$ if, and only if, $k\equiv \pm 1 ~\mathrm{mod} ~n$.\\

We observe the following elementary but useful fact.

\begin{lemma}\label{TorsionOnly}
		 If $G$ is a finite group, and $ \mathcal V_n(\mathbb ZG)= \{1\}$ for some $n\geq 1$, then all units of $\mathcal V(\mathbb ZG)$ must be torsion; in particular, all Bass and bicyclic units must be trivial.
\end{lemma}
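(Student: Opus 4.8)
The plan is to prove the apparently stronger statement that $\mathcal{V}(\mathbb{Z}G)$ is in fact a \emph{finite} group, from which the torsion conclusion is immediate; the triviality of all Bass and bicyclic units then drops out of the structural facts already recorded in item (c) and in the paragraph on Bass units. First I would isolate the one genuinely ring-theoretic input: for a finite group $G$ the powers of the augmentation ideal stabilise rationally. By Maschke's theorem $\mathbb{Q}G$ is semisimple, and $\Delta(G)\otimes\mathbb{Q}=\mathbb{Q}G(1-e)$, where $e=|G|^{-1}\sum_{g\in G}g$ is the central principal idempotent; since $1-e$ is an idempotent generating this two-sided ideal, one gets $(\Delta(G)\otimes\mathbb{Q})^n=\Delta(G)\otimes\mathbb{Q}$ for every $n\ge 1$. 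Consequently $\Delta^n(G)$ and $\Delta(G)$ span the same $\mathbb{Q}$-subspace of $\mathbb{Q}G$, so $\Delta^n(G)$ is a full-rank sublattice of the free abelian group $\Delta(G)$ and the index $[\Delta(G):\Delta^n(G)]$ is finite; equivalently $\Delta(G)/\Delta^n(G)$ is a finite abelian group.

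Next I would pass to the quotient ring $R=\mathbb{Z}G/\Delta^n(G)$ and examine its normalised units. The augmentation descends to $R$, and every normalised unit of $R$ has the shape $1+\bar a$ with $\bar a\in\Delta(G)/\Delta^n(G)$; since that group is finite, the group $\mathcal{V}(R)$ of normalised units of $R$ lies in the finite set $1+\Delta(G)/\Delta^n(G)$ and is therefore finite. Reduction modulo $\Delta^n(G)$ is a ring homomorphism, hence induces a group homomorphism $\mathcal{V}(\mathbb{Z}G)\to\mathcal{V}(R)$ whose kernel is exactly $\mathcal{V}(\mathbb{Z}G)\cap(1+\Delta^n(G))=\mathcal{V}_n(\mathbb{Z}G)$. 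Under the hypothesis $\mathcal{V}_n(\mathbb{Z}G)=\{1\}$ this map is injective, so $\mathcal{V}(\mathbb{Z}G)$ embeds into the finite group $\mathcal{V}(R)$ and is therefore finite; in particular every unit is torsion. (One may phrase the same point without invoking finiteness of the whole group: a unit $u$ of infinite order would map to an element of some finite order $k$ in $\mathcal{V}(R)$, whence $1\ne u^k\in\mathcal{V}_n(\mathbb{Z}G)$, contradicting the hypothesis.)

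Finally, for the "in particular" clause I would invoke the dichotomies already stated: by item (c) a bicyclic unit $u_{g,h}$ is either trivial or of infinite order, and a Bass unit $u_{k,m}(g)$ is likewise either trivial (an element of $G$) or of infinite order. Since $\mathcal{V}(\mathbb{Z}G)$ is torsion, the infinite-order alternative is excluded in both cases, so all such units are trivial.

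The only non-formal step, and hence the main obstacle, is the finiteness of $[\Delta(G):\Delta^n(G)]$, i.e. the rational stabilisation $(\Delta(G)\otimes\mathbb{Q})^n=\Delta(G)\otimes\mathbb{Q}$; this is precisely where the semisimplicity of $\mathbb{Q}G$, and thus the finiteness of $G$, is used. Everything afterwards—the finiteness of $\mathcal{V}(R)$, the kernel computation, and the passage from torsion to triviality of Bass and bicyclic units—is routine.
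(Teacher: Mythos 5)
Your proof is correct and rests on the same mechanism as the paper's: for finite $G$ the quotient $\Delta(G)/\Delta^n(G)$ is finite, so every normalized unit has a power lying in $1+\Delta^n(G)=\mathcal V_n(\mathbb ZG)=\{1\}$. You merely make explicit what the paper leaves implicit (the rational stabilisation of the augmentation powers) and package the conclusion as finiteness of all of $\mathcal V(\mathbb ZG)$, which is a harmless strengthening.
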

\begin{proof}
	Let $u\in \mathcal V(\mathbb ZG)$.  As $u \in 1 + \Delta(G)$ and $G$ is finite,  therefore, for any $n\geq 1$, there exists $m_n\in \mathbb{N}$ such that $u^{m_n}\in 1 +\Delta^n(G)$, i.e., $u^{m_n}\in \mathcal V_n(\mathbb ZG)=\{1\}$. Hence $u$ is a torsion element.
\end{proof}

A well known result in the theory of units in group rings states that, for a  finite group $G$, all central torsion units in $\mathbb ZG$ must be trivial (see e.g. \cite{Seh93}, Corollary 1.7). Further, a group $G$, such that all central units in $\mathbb ZG$ are trivial, is termed as a {\emph {cut-group}}; this class of groups is currently a topic of active research (\cite{BMP17}, \cite{Mah18}, \cite{Bac18}, \cite{BCJM}, \cite{BMP19}, \cite{Tre19}). In this article, we quite often use the characterization of abelian cut-groups; namely, an abelian group $G$ is a cut-group, if, and only if, its exponent divides $4$ or $6$. \\

We now give a classification of finite groups for which this filtration terminates with identity after a finite number of steps.

\begin{theorem}\label{theo:Finite_Terrminates}
	Let $G$ be a finite group. Then $\mathcal{V}_n(\mathbb{Z}G)=\{1\}$ for some $n\geq 1$ if, and only if, either\begin{description}
		\item[(i)] $G$ is an abelian cut-group; or\item[(ii)] $G=K_8\times E$, where $K_8$ denotes quaternion group of order $8$ and $E$ denotes elementary abelian 2-group.
	\end{description}
\end{theorem}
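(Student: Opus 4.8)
The plan is to prove both implications separately. The forward implication I would treat as a pure necessity argument resting on Lemma~\ref{TorsionOnly}, while the reverse implication I would settle by showing the filtration actually terminates, using the classical description of integral group rings with only trivial units together with the vanishing of $\Delta^\omega$ for finite $p$-groups.

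For the forward direction, assume $\mathcal{V}_n(\mathbb{Z}G)=\{1\}$ for some $n$. By Lemma~\ref{TorsionOnly} every unit of $\mathcal{V}(\mathbb{Z}G)$ is torsion, so in particular every bicyclic unit $u_{g,h}$ and every Bass unit $u_{k,m}(g)$ is trivial. From these two facts I would extract two structural constraints. First, since $u_{g,h}$ is trivial exactly when $h$ normalizes $\langle g\rangle$, triviality of all bicyclic units forces every cyclic subgroup of $G$ to be normal; as every subgroup is generated by cyclic ones, $G$ is a Dedekind group, hence either abelian or Hamiltonian, and in the latter case $G\cong K_8\times E\times A$ with $E$ elementary abelian of exponent $2$ and $A$ a finite abelian group of odd order. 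Second, since $u_{k,m}(g)$ is trivial precisely when $k\equiv\pm1\pmod{\mathrm{ord}(g)}$, triviality of all Bass units forces $(\mathbb{Z}/\mathrm{ord}(g)\mathbb{Z})^\times=\{\pm1\}$ for every $g\in G$, i.e. $\mathrm{ord}(g)\in\{1,2,3,4,6\}$. Combining the two constraints: in the abelian case the exponent equals the maximal element order and thus lies in $\{1,2,3,4,6\}$, so it divides $4$ or $6$ and $G$ is an abelian cut-group, giving (i); in the Hamiltonian case an element of order $4$ in $K_8$ together with an element of odd prime order $p$ in $A$ would yield an element of order $4p\notin\{1,2,3,4,6\}$, so $A=\{1\}$ and $G=K_8\times E$, giving (ii).

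For the reverse direction I would invoke the classical theorem (going back to Higman; see \cite{Seh93}) that $\mathcal{V}(\mathbb{Z}G)=G$ exactly when $G$ is abelian of exponent dividing $4$ or $6$ or $G$ is a Hamiltonian $2$-group $K_8\times E$. Under either hypothesis (i) or (ii) this gives $\mathcal{V}(\mathbb{Z}G)=G$, whence $\mathcal{V}_n(\mathbb{Z}G)=G\cap(1+\Delta^n(G))=D_n(G)$ for all $n$, and it remains only to show the dimension series reaches $\{1\}$. In case (i), $G$ is abelian, so $D_2(G)=\gamma_2(G)=\{1\}$ and the filtration already stops at $n=2$ (equivalently this follows from \eqref{T2}). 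In case (ii), $G$ is a finite $2$-group; reducing modulo $2$, the augmentation ideal of $\mathbb{F}_2G$ is nilpotent, so $\Delta^N(G)\subseteq 2\,\mathbb{Z}G$ for some $N$, and since $\Delta^N(G)\subseteq\Delta(G)$ this gives $\Delta^N(G)\subseteq 2\,\Delta(G)$; an easy induction then yields $\Delta^{jN}(G)\subseteq 2^j\Delta(G)$, so $\Delta^\omega(G)\subseteq\bigcap_j 2^j\mathbb{Z}G=\{0\}$. As $G$ is finite, for each $g\neq1$ there is $n_g$ with $g-1\notin\Delta^{n_g}(G)$, and taking $n=\max_g n_g$ gives $D_n(G)=\{1\}$.

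The routine ingredients are the triviality criteria for bicyclic and Bass units and the Dedekind classification, all of which I would simply cite. The genuine inputs — and where I expect the main work to lie — are the two assertions used in the reverse direction: that $K_8\times E$ has only trivial units in its integral group ring (Higman's theorem, the real crux), and that the dimension subgroups of a finite $2$-group eventually vanish, which I would establish by the mod-$2$ nilpotence argument sketched above. I would also take care over the Bass-unit count, verifying precisely that $(\mathbb{Z}/n\mathbb{Z})^\times=\{\pm1\}$ holds if and only if $n\in\{1,2,3,4,6\}$, so that no element order outside this set can escape detection.
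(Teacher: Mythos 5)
Your proof is correct, and the forward direction takes a genuinely different route from the paper's. The paper observes that $\mathcal{V}_n(\mathbb{Z}G)=\{1\}$ forces $\mathcal{V}(\mathbb{Z}G)$ to be nilpotent (since $\gamma_n(\mathcal{V}(\mathbb{Z}G))\subseteq\mathcal{V}_n(\mathbb{Z}G)$) and then imports Milies' classification of finite groups with nilpotent normalized unit group to obtain ``abelian or $K_8\times E$'' in one stroke, using Lemma \ref{TorsionOnly} only to rule out non-trivial Bass units in the abelian non-cut case. You instead extract everything from Lemma \ref{TorsionOnly}: triviality of all bicyclic units makes every cyclic subgroup normal, giving the Dedekind structure $K_8\times E\times A$, and triviality of all Bass units bounds every element order to the set $\{1,2,3,4,6\}$, which both eliminates $A$ and pins down the abelian exponent. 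This is more self-contained --- you replace Milies' theorem by the Dedekind classification plus the elementary fact that $(\mathbb{Z}/n\mathbb{Z})^\times=\{\pm1\}$ exactly for $n\in\{1,2,3,4,6\}$ --- at the cost of a slightly longer case analysis. In the converse, both arguments first reduce to Higman's trivial-unit theorem so that $\mathcal{V}_n(\mathbb{Z}G)=D_n(G)$; the paper then quotes $D_3(G)=\gamma_3(G)$ for the class-$2$ group $K_8\times E$ to get termination at $n=3$ exactly, whereas your mod-$2$ nilpotence argument proves $\Delta^\omega(G)=\{0\}$ (the $p$-group case of Theorem \ref{Delta0}) and yields termination only at some unspecified finite $n$ --- entirely sufficient for the statement as phrased, just less sharp.
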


\begin{proof}
	Let $G$ be a finite group such that $\mathcal{V}_n(\mathbb{Z}G)=\{1\}$, for some $n\geq 1$. Since $\gamma_n(\mathcal{V}(\mathbb{Z}G))\subseteq \mathcal{V}_n(\mathbb{Z}G)$, we have that $\mathcal{V}(\mathbb{Z}G)$ is nilpotent. By classification of finite groups $G$ with nilpotent unit group $\mathcal{V}(\mathbb{Z}G)$ (\cite{Mil76}, Theorem 1), $G$ must be either an abelian group or $G=K_8\times E$. Now, if $G$ is a finite abelian group which is not a cut-group, then there exists a non-trivial Bass unit in $\mathcal{V}(\mathbb{Z}G)$, which in view of Lemma \ref{TorsionOnly}, is not possible.

	Conversely, if $G$ is an abelian cut-group, then $\mathcal{V}_2(\mathbb{Z}G)=\{1\}$ (see (\ref{T2})) and if $G=K_8\times E$, then by Berman-Higman theorem, we have $\mathcal{V}(\mathbb{Z}G)=G$ implying $D_n(G)=\mathcal{V}_n(\mathbb{Z}G),~n\geq 1$. Moreover, in this case $G$ is a nilpotent group of class~$2$, i.e., $\gamma_3(G)=\{1\}$. Since $D_3(G)= \gamma_3(G)$ (see \cite{Pas79}, Theorem 5.10, p.\,66), we obtain  $\mathcal{V}_3(\mathbb{Z}G)=\{1\}$.
\end{proof}

\section{Groups $G$ with $\Delta$-adic residue  of $\mathcal{V}(\mathbb{Z}G)$ trivial}

The triviality of $\Delta$-adic residue of $\mathcal{V}(\mathbb{Z}G)$ naturally restricts the structure of the group $G$, and consequently also of its subgroups, it being a subgroup closed \linebreak property. In this section, we explore the structure of groups with this property.

Before proceeding further, observe that if, for a group $G$, $\Delta^{\omega}(G)=\{0\}$, then trivially $\mathcal{V}_\omega(\mathbb{Z}G)=\{1\}$. A characterization for a group $G$ to have the property $\Delta^{\omega}(G)=\{0\}$ is known; we recall it for the convenience of reader. To this end, we need few definitions, which we give next.

A group $G$ is said to be {\it discriminated by a class $\mathbf{C}$} of groups if, for every finite subset $g_1,\,g_2,\,\ldots\,,\,g_n$ of distinct elements of $G$, there exists a group $H\in \mathbf{C}$ and a homomorphism  $\varphi:G\rightarrow H$, such that $\varphi(g_i)\neq \varphi(g_j)$ for  $i\neq j$. For a class of groups $\mathbf{C}$, a group $G$ is said to be residually in $\mathbf{C}$ if it satisfies:
\begin{quote} For every $1\neq x\in G$, there exists  a normal subgroup $N_x$ of $G$ such that $x\not\in N_{x}$ and $G/N_{x}\in \mathbf{C}$.\end{quote} Clearly, if $G\in \mathbf{C}$, then $G$ is residually in $\mathbf{C}$.
Note that if a class $\mathbf{C}$ is closed under subgroups and finite direct sums, then to say that $G$ is {\it residually in $ \mathbf{C}$} is equivalent to saying that $G$ is discriminated by the  class $\mathbf{C}$. 
\begin{theorem}\label{Delta0}$\mathrm(\cite{Lic77},see~also~ \cite{Pas79},~ Theorem~2.30,~ p.\, 92)$ For a group $G$,
$\Delta^{\omega}(G)=\{0\}$ if, and only if, either $G$ is residually `torsion-free nilpotent', or $G$ is discriminated by the class of nilpotent $p_i$-groups, $i\in I$, of bounded exponents, where $\{p_i,i\in I\}$ is some set of primes.
\end{theorem}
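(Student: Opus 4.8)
The statement is a known characterization (\cite{Lic77}), so the plan is to establish the two implications separately, treating sufficiency first as it is the more routine direction.

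\emph{Sufficiency.} The core is a transfer principle: if $G$ is discriminated by a class $\mathbf{C}$ every member of which satisfies $\Delta^\omega(H)=\{0\}$, then $\Delta^\omega(G)=\{0\}$. Indeed, given $0\neq\alpha=\sum_{i=1}^k m_i g_i\in\mathbb{Z}G$ with the $g_i$ distinct and $m_i\neq 0$, discrimination yields $\varphi\colon G\to H\in\mathbf{C}$ with the $\varphi(g_i)$ pairwise distinct; the induced ring map $\tilde\varphi\colon\mathbb{Z}G\to\mathbb{Z}H$ sends $\alpha$ to $\sum m_i\varphi(g_i)\neq 0$ while carrying $\Delta^n(G)$ into $\Delta^n(H)$, so $\alpha\in\Delta^\omega(G)$ would force $\tilde\varphi(\alpha)\in\Delta^\omega(H)=\{0\}$, a contradiction. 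Since the class of torsion-free nilpotent groups is closed under subgroups and finite direct products, being residually torsion-free nilpotent coincides with being discriminated by it, so in both alternatives it remains only to verify the base cases $\Delta^\omega(H)=\{0\}$. For $H$ torsion-free nilpotent this is classical (Mal'cev/Jennings theory: reduce to finitely generated $H$, embed in unitriangular integer matrices, and use that the rationalized associated graded ring is a domain). For $H$ a nilpotent $p$-group of bounded exponent, $H$ is locally finite, so after restricting to the support of a given $\alpha$ one reduces to a finite $p$-group $P$; there the augmentation ideal of $\mathbb{F}_pP$ is nilpotent, say of index $N$, whence $\Delta^N(P)\subseteq p\,\mathbb{Z}P$, so $\Delta^{Nk}(P)\subseteq p^k\mathbb{Z}P$ and therefore $\Delta^\omega(P)\subseteq\bigcap_k p^k\mathbb{Z}P=\{0\}$, the last intersection vanishing because $\mathbb{Z}P$ is $\mathbb{Z}$-free of finite rank.

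\emph{Necessity.} This is the substantial direction. First, $\Delta^\omega(G)=\{0\}$ gives $\bigcap_n D_n(G)=G\cap(1+\Delta^\omega(G))=\{1\}$, so $G$ is residually nilpotent and is in fact discriminated by its nilpotent dimension quotients $G/D_n(G)$. The task is to refine each separating quotient into one of the two admissible types and, crucially, to show that a single global type suffices. Here I would exploit the interplay between the integral filtration and its rational and mod-$p$ reductions: the associated graded Lie ring $\bigoplus_n D_n(G)/D_{n+1}(G)$ governs which elements are detected rationally (these have infinite order and lead to torsion-free nilpotent images) and which are detected at a single prime (these lead to finite $p$-group images).

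The decisive obstruction, and the reason the two alternatives cannot be blended, is the coprime-order phenomenon already recorded in \eqref{eq:bicyclic}: if $g,h\in G$ have coprime finite orders with $(g-1)(h-1)\neq 0$, then this element lies in $\Delta^\omega(G)$, contradicting our hypothesis; similarly an element of infinite $p$-height would manufacture a nonzero member of $\Delta^\omega(G)$, which is why the bounded-exponent condition appears in the second alternative. Thus $\Delta^\omega(G)=\{0\}$ forbids genuine entanglement of distinct primes and forbids infinite height, and one argues that either all torsion can be cleared rationally, placing $G$ in the residually torsion-free nilpotent alternative, or the residual nilpotence is concentrated, finite subset by finite subset, at a single prime of bounded exponent, placing $G$ in the discriminated-by-$p_i$-groups alternative. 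The hardest part will be establishing this global dichotomy rigorously: one must show that the purity forced pointwise by the coprime-order obstruction propagates to a uniform structural statement about $G$, and this is where the finer (restricted) Lie-ring analysis following Lichtman, together with careful control of exponents, must be brought in.
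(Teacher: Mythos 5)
The paper does not prove this statement at all: it is quoted verbatim from Lichtman \cite{Lic77} (see also \cite{Pas79}, Theorem 2.30), so there is no internal proof to compare yours against --- within this paper the ``proof'' is the citation. Judged on its own terms, your proposal is only a partial argument. The sufficiency direction is essentially right in outline (the transfer principle along a discriminating family is correct, and both base cases are true statements), but the reduction of a nilpotent $p$-group $H$ of bounded exponent to a finite $p$-group ``by restricting to the support of $\alpha$'' does not work as stated: from $\alpha\in\Delta^n(H)$ with support contained in a finite subgroup $P$ one cannot conclude $\alpha\in\Delta^n(P)$, since the factors $h-1$ exhibiting $\alpha$ as an element of $\Delta^n(H)$ need not come from $P$, and in general $\Delta^n(H)\cap\mathbb{Z}P\neq\Delta^n(P)$. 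The standard route for this base case is the Jennings--Zassenhaus filtration of $\mathbb{F}_pH$ combined with the bounded-exponent hypothesis, not a support reduction.

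The necessity direction, which is the real content of Lichtman's theorem, is not proved in your text. You correctly identify the obstructions (elements of coprime finite orders, infinite $p$-height) and the intended dichotomy, but you explicitly defer ``the hardest part'' --- showing that the purity forced pointwise by these obstructions propagates to one of the two global alternatives --- to ``the finer (restricted) Lie-ring analysis following Lichtman.'' That deferred step is precisely the theorem, so as written your argument is a plan rather than a proof. Since the paper treats the statement as a black box, the appropriate course is either to cite \cite{Lic77} as the authors do, or to reproduce Lichtman's argument in full; the sketch does neither, and the gap in the necessity direction is genuine.
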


 We now characterise the finite groups $G$ for which $ \mathcal{V}(\mathbb{Z}G)$ has trivial $\Delta$-adic residue. For this, we first prove the following:
 
 \begin{lemma}\label{lem:No_Nilpotent}
 Let $G, H$ be finite groups of relatively prime orders. If $\mathbb{Z}G$ has a non-zero nilpotent element, then $\mathcal{V}(\mathbb{Z}[G\oplus H])$ does not have trivial $\Delta$-adic residue.
 \end{lemma}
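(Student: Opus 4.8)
The plan is to produce explicitly a nontrivial normalized unit lying in $\mathcal{V}_\omega(\mathbb{Z}[G\oplus H])$, manufactured from the given nilpotent element together with the coprimeness of the orders. Write $R=\mathbb{Z}[G\oplus H]=\mathbb{Z}G\otimes_{\mathbb{Z}}\mathbb{Z}H$, and let $\alpha\in\mathbb{Z}G\subseteq R$ be a nonzero nilpotent element, say $\alpha^N=0$.

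First I would record that $\alpha$ automatically lies in the augmentation ideal $\Delta(G)$: applying the augmentation $\epsilon$ to $\alpha^N=0$ yields $\epsilon(\alpha)^N=0$ in $\mathbb{Z}$, hence $\epsilon(\alpha)=0$. Next, fixing any element $1\neq h\in H$, I would set $\beta:=(h-1)\alpha\in R$ and consider $u:=1+\beta$. Since $G$ and $H$ commute in $G\oplus H$, the factors $h-1$ and $\alpha$ commute, so $\beta^N=(h-1)^N\alpha^N=0$; thus $\beta$ is nilpotent and $u=1+\beta$ is a unit of $R$ with inverse $1-\beta+\cdots+(-\beta)^{N-1}$. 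Moreover $\epsilon(\beta)=\epsilon(h-1)\,\epsilon(\alpha)=0$, so $u\in\mathcal{V}(R)$ is a normalized unit.

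It then remains to verify two things: that $u\neq 1$ and that $u\in\mathcal{V}_\omega(R)$. For $u\neq 1$, I would argue $\beta\neq 0$ via the tensor description: under $R=\mathbb{Z}G\otimes_{\mathbb{Z}}\mathbb{Z}H$ we have $\beta=\alpha\otimes(h-1)$, and since $\mathbb{Z}G$ and $\mathbb{Z}H$ are free abelian with $\alpha\neq 0$ and $h-1\neq 0$, this tensor is nonzero. For $u\in\mathcal{V}_\omega(R)$, the essential point is $\beta\in\Delta^\omega(G\oplus H)$: writing $\alpha=\sum_{g\in G}a_g(g-1)\in\Delta(G)$ gives $\beta=\sum_g a_g\,(g-1)(h-1)$, and each summand $(g-1)(h-1)$ lies in $\Delta^\omega(G\oplus H)$ because the order of $g$ (dividing $|G|$) and the order of $h$ (dividing $|H|$) are coprime --- this is precisely the computation underlying (\ref{eq:bicyclic}). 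As $\Delta^\omega(G\oplus H)=\cap_{n\ge 1}\Delta^n(G\oplus H)$ is an additive subgroup, it follows that $\beta\in\Delta^\omega(G\oplus H)$, whence $u\in 1+\Delta^n(G\oplus H)$ for every $n$, and therefore $u\in\mathcal{V}_\omega(R)\setminus\{1\}$.

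The only delicate input is the inclusion $(g-1)(h-1)\in\Delta^\omega(G\oplus H)$ for coprime-order $g,h$; this is the crux of the argument, but it is already available from the discussion preceding the lemma. The remaining steps are bookkeeping: checking $\beta\neq 0$, which the tensor-product description settles cleanly, and confirming that $u$ is normalized. I therefore expect no genuine obstacle beyond correctly invoking the coprimeness fact and assembling the pieces.
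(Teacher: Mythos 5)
Your proposal is correct and follows essentially the same route as the paper: both take the unit $1+\alpha(h-1)$, observe it is a nonzero nilpotent perturbation of $1$ (using that $\alpha$ and $h-1$ commute), and place it in $\mathcal{V}_\omega(\mathbb{Z}[G\oplus H])$ via the coprimeness fact $\Delta(G)\Delta(H)\subseteq\Delta^\omega(G\oplus H)$. Your write-up merely makes explicit two points the paper leaves implicit, namely the tensor-product verification that $\alpha(h-1)\neq 0$ and the reduction of $\alpha(h-1)\in\Delta^\omega$ to the elementary fact recorded at (\ref{eq:bicyclic}).
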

\begin{proof}
 Let $G, H$ be finite groups of relatively prime order and let $\alpha(\neq 0)$ be a nilpotent element in $\mathbb{Z}G$, so that $\alpha^k=0$, for some $k\in \mathbb{N}$ and hence $\alpha\in \Delta(G).$ Further, if $h(\neq 1)\in H$, then $h-1 \in \Delta(H)$. By assumption on the orders of $G$ and $H$, we obtain $\alpha(h-1)\in \Delta^{\omega}(G\oplus H)$. Note that as $\alpha$ is nilpotent and as $\alpha$ and $h-1$ commute, $\alpha(h-1)$ is a non-zero nilpotent element in $\mathbb{Z}[G \oplus H]$. Consequently, $1+\alpha(h-1)$ is a non-trivial unit in $\mathcal{V}_\omega(\mathbb{Z}[G \oplus H]).$
\end{proof}

\begin{theorem}\label{Finite_TrivialResidue}
	Let $G$ be a finite group. Then $\mathcal V_\omega(\mathbb{Z}G)=\{1\}$ if, and only if, either 
\begin{description}
	\item[(i)] $G$ is an abelian group of exponent $6$, or;
	\item[(ii)] $G$ is a $p$-group.
\end{description}
\end{theorem}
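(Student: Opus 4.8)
The plan is to prove both implications through one structural principle: for a finite group, membership in $\mathcal V_\omega(\mathbb ZG)$ is detected by the projections onto the integral group rings of the Sylow subgroups. Sufficiency is quick once the earlier results are in hand. If $G$ is a finite $p$-group, then $G$ is itself a nilpotent $p$-group of bounded exponent and so is discriminated by the one-element class $\{G\}$; Theorem \ref{Delta0} then gives $\Delta^\omega(G)=\{0\}$, whence $\mathcal V_\omega(\mathbb ZG)=\{1\}$. If $G$ is abelian of exponent $6$, then $G$ is a cut-group, so Theorem \ref{theo:Finite_Terrminates} already yields $\mathcal V_n(\mathbb ZG)=\{1\}$ for some $n$; concretely $\mathcal V(\mathbb ZG)=G$ and $\mathcal V_2(\mathbb ZG)=D_2(G)=\gamma_2(G)=\{1\}$, so $\mathcal V_\omega(\mathbb ZG)=\{1\}$.

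For necessity I assume $\mathcal V_\omega(\mathbb ZG)=\{1\}$ with $G$ not a $p$-group, and aim to force $G$ abelian of exponent $6$. First I would use the bicyclic units: by (\ref{eq:bicyclic}) every $u_{g,h}$ with $\gcd(|g|,|h|)=1$ lies in $\mathcal V_\omega(\mathbb ZG)$, hence is trivial, so any two elements of coprime order normalize each other's cyclic subgroups and therefore commute (their commutator lies in $\langle g\rangle\cap\langle h\rangle=\{1\}$). Thus $G$ is nilpotent and $G=\prod_p G_p$. Next, for each prime $p$ I write $G=G_p\times H$ with $H=\prod_{q\ne p}G_q\neq\{1\}$ of order prime to $|G_p|$; Lemma \ref{lem:No_Nilpotent} then forces $\mathbb ZG_p$ to have no nonzero nilpotent element. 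By the standard classification of $p$-groups whose rational group ring is free of nilpotents, each odd Sylow subgroup is abelian and $G_2$ is either abelian or $K_8\times E$.

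The heart of the argument is the identity $\Delta^\omega(G)=\bigcap_p\ker\varepsilon_{p'}$ for finite nilpotent $G$, where $\varepsilon_{p'}\colon\mathbb ZG\to\mathbb ZG_p$ augments the $p'$-part. The inclusion $\subseteq$ follows from $\varepsilon_{p'}(\Delta^n(G))\subseteq\Delta^n(G_p)$ together with $\Delta^\omega(G_p)=\{0\}$, and $\supseteq$ holds because $\bigcap_p\ker\varepsilon_{p'}$ is generated as an ideal by the coprime products $(g-1)(h-1)$, each lying in $\Delta^\omega(G)$ exactly as in (\ref{eq:bicyclic}). Consequently $\mathcal V_\omega(\mathbb ZG)$ is precisely the kernel of $\Phi=(\varepsilon_{p'})_p\colon\mathcal V(\mathbb ZG)\to\prod_p\mathcal V(\mathbb ZG_p)$, so the hypothesis says $\Phi$ is injective. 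I would then compare torsion-free ranks: writing $\mathbb QG=\bigotimes_p\mathbb QG_p$ and matching Wedderburn components, each component in which at least two Sylow factors act nontrivially (a \emph{mixed} component) contributes to $\mathrm{rank}\,\mathcal V(\mathbb ZG)$ but not to $\sum_p\mathrm{rank}\,\mathcal V(\mathbb ZG_p)$, so injectivity of $\Phi$ forces $\mathrm{rank}\,\mathcal V(\mathbb ZG)\le\sum_p\mathrm{rank}\,\mathcal V(\mathbb ZG_p)$ and hence every mixed component to have unit rank $0$. A mixed component arising from abelian data is a cyclotomic field $\mathbb Q(\zeta_D)$ with $D$ divisible by at least two primes, of unit rank $\tfrac12\varphi(D)-1$, which vanishes only for $D=6$; this pins the primes to $\{2,3\}$ and both exponents to their minimum, giving $G\cong C_2^{a}\times C_3^{b}$ of exponent $6$. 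The same count excludes $G_2=K_8\times E$ in the presence of a nontrivial odd part, because the mixed components $\mathbb H\otimes\mathbb Q(\zeta_d)$ (for $d$ dividing the odd exponent) have positive unit rank — whether they split to a matrix algebra (e.g. $M_2(\mathbb Q(\zeta_3))$, with infinite $GL_2$-type unit group) or remain a quaternion division algebra over a totally complex cyclotomic field — while the accompanying fields $\mathbb Q(\zeta_{2d})$ already have positive rank once $d\ge5$.

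I expect the main obstacle to be exactly this last step, ruling out $G=K_8\times E\times A$ with $A\neq\{1\}$ odd. Here Lemma \ref{lem:No_Nilpotent} is powerless, since $\mathbb Z[K_8\times E]$ is genuinely nilpotent-free and there is no spectator prime to expose the nilpotent that appears only after tensoring $K_8$ with the odd part; and all bicyclic units are trivial because such $G$ is Hamiltonian. The substitute I would rely on is the rank comparison above (equivalently, an explicit witness: a suitable Bass unit on a cyclic subgroup of mixed order, or a ratio of two units with equal Sylow-projections, lands nontrivially in $\ker\Phi=\mathcal V_\omega(\mathbb ZG)$). Making this rigorous — in particular verifying that every mixed quaternionic component over a totally complex cyclotomic field has strictly positive unit rank — is the only genuinely technical point; the rest reduces to the cyclotomic unit-rank formula and the already-established reduction to abelian-or-$K_8$ Sylow structure.
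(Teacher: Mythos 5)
Your route is genuinely different from the paper's, and most of it is sound, but it has a real gap exactly where you flag one. For comparison: the paper never touches the Wedderburn decomposition of $\mathbb QG$ or any non-commutative unit ranks. It works entirely inside cyclic subgroups: an element of order $pq$ generates $H=\langle z\rangle$ with $\mathcal V_\omega(\mathbb ZH)\subseteq\mathcal V_\omega(\mathbb ZG)$, and the exact sequence $1\to \mathcal V_\omega(\mathbb ZH)\to\mathcal V(\mathbb ZH)\to\mathcal V(\mathbb Z\langle x\rangle)\oplus\mathcal V(\mathbb Z\langle y\rangle)\to 1$ together with the abelian rank formula $\rho=\frac12\{|A|+n_2-2c_A+1\}$ eliminates all $(p,q)\neq(2,3)$ and then, applied to $C_{12}$ and $C_{18}$, forces exponent $6$; nilpotency comes from Musson--Weiss via residual nilpotence of $\mathcal V(\mathbb ZG)$, and Lemma \ref{lem:No_Nilpotent} is invoked only at the very end, on $E\oplus S$ with $E$ already elementary abelian of exponent $2$ --- so the Hamiltonian case $K_8\times E$ simply never arises. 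You instead get nilpotency from bicyclic units (correct, and more self-contained: triviality of $u_{g,h}$ and $u_{h,g}$ for coprime orders gives $[g,h]\in\langle g\rangle\cap\langle h\rangle=\{1\}$, whence every Sylow subgroup is normal), apply Lemma \ref{lem:No_Nilpotent} early to each Sylow factor, and then rely on the identity $\Delta^\omega(G)=\bigcap_p\ker\varepsilon_{p'}$ plus a component-by-component rank count. That identity is correct for finite nilpotent $G$ (both inclusions go through via the tensor decomposition of $\mathbb ZG$), and the cyclotomic count does pin the all-Sylows-abelian case down to $C_2^a\times C_3^b$ of exponent $6$.

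The gap is the case $G=K_8\times E\times S$ with $S$ a nontrivial $3$-group, which survives every step you actually carry out: Lemma \ref{lem:No_Nilpotent} is silent, all bicyclic units are trivial since $G$ is Hamiltonian-by-abelian here, and all abelian mixed components are $\mathbb Q$ or $\mathbb Q(\zeta_3)$ of unit rank $0$. You only sketch why $\mathbb H\otimes\mathbb Q(\zeta_3)$ yields an element of $\ker\Phi$, and for non-commutative components the inference ``positive unit rank $\Rightarrow\ker\Phi\neq\{1\}$'' is not just Dirichlet; it needs an argument about units of $\mathbb ZG$ congruent to $1$ modulo the pure components. The cleanest repair stays inside your own framework: $\mathbb H\otimes_{\mathbb Q}\mathbb Q(\zeta_3)\cong M_2(\mathbb Q(\zeta_3))$ (it splits at $2$ because $2$ is inert in $\mathbb Q(\zeta_3)$, and at infinity because $\mathbb Q(\zeta_3)$ is complex), so $\mathbb Z[K_8\times C_3]$ contains a nonzero nilpotent $\alpha$. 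Its images $\varepsilon_{2'}(\alpha)$ and $\varepsilon_{3'}(\alpha)$ are nilpotents in the nilpotent-free rings $\mathbb Z[K_8]$ and $\mathbb Z[C_3]$, hence zero, so your identity places $\alpha$ in $\Delta^\omega(G)$ and $1+\alpha$ is a non-trivial element of $\mathcal V_\omega(\mathbb ZG)$ --- no unit-rank computation for quaternion algebras needed. With that inserted your proof closes, though it remains noticeably heavier machinery than the paper's cyclic-subgroup argument.
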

\begin{proof}
	Let $G$ be a finite group with $\mathcal{V}_\omega(\mathbb{Z}G)=\{1 \}$. If $G$ is not a $p$-group, then let  $z\in G$ be an element of order $pq$, with $p,\ q $  primes, $p<q$. The cyclic subgroup $H:=\langle z\rangle$ of $G$ satisfies $\mathcal{V}_\omega(\mathbb{Z}H)=\{1 \}$. On expressing $z$ as $z=xy$ with elements $x,y$ of orders $p$ and $q$ respectively, we have an exact sequence
	$$1\to \mathcal{V}_\omega(\mathbb{Z}H)\to \mathcal{V}(\mathbb{Z}H)\to \mathcal{V}(\mathbb{Z}\langle x\rangle)\oplus \mathcal{V}(\mathbb{Z}\langle y\rangle)\to 1,$$ induced by the natural projections $H\to \langle x\rangle,\ H\to \langle y\rangle.$ Recall that for a finite abelian group $A$, the torsion-free rank $\rho(\mathcal{V}(\mathbb{Z}A)) $ of the unit group $\mathcal{V}(\mathbb{Z}A)$ is given by the following formula (see \cite{Seh78}, Theorem 3.1, p.\,54):
	$$\rho(\mathcal{V}(\mathbb{Z}A)) =\frac{1}{2}\{|A|+n_2-2c_A+1\},$$	
	where $n_2$ is the number of elements of order 2 in $A$ and $c_A$ is the number of cyclic subgroups in $A$. Thus,
	
	$$\rho(\mathcal{V}(\mathbb{Z}\langle x\rangle ))=
	\begin{cases}
	0,~\mathrm{if}~p=2,\\
	\frac{p-3}{2},~{otherwise}
	\end{cases},~~~\rho(\mathcal{V}(\mathbb{Z}\langle y\rangle ))=\frac{q-3}{2}$$
	and
	$$\rho(\mathcal{V}(\mathbb{Z}H))=\begin{cases}
	q-3,~\mathrm{if}~p=2,\\\frac{pq-7}{2},~\mathrm{otherwise}.
	\end{cases}$$
	Therefore,
	$$\rho(\mathcal{V}(\mathbb{Z}H))>\rho(\mathcal{V}(\mathbb{Z}\langle x\rangle))+\rho(\mathcal{V}(\mathbb{Z}\langle y\rangle)),$$ except possibly when $p=2$ and $q=3$, implying that $G$ must be a (2,3)-group.
	
	Moreover, as $\mathcal{V}(\mathbb{Z}G)$ has trivial $\Delta$-adic residue, it follows that it is residually nilpotent. Hence, by \cite{MW82}, $G$ is a nilpotent group with its commutator subgroup a $p$-group. Thus, if the exponent of $G$ is not $6$, then either $G$ (i)  has an element $x$ of order 4 and an element $y$ of order 3; or (ii) has an element $x$ of order 2 and an element $y$ of order 9. In both the cases, rank considerations, as above yield $\mathcal{V}_\omega(\mathbb{Z}G)\neq\{1\}.$  Consequently, $G=E\oplus S$, where $E$ is an elementary abelian \linebreak $2$-group and $S$ is a group of exponent $3$. As we assume $\mathcal{V}_\omega(\mathbb{Z}G)=\{1\}$, it follows from Lemma \ref{lem:No_Nilpotent} that neither $\mathbb{Z}E$ nor $\mathbb{Z}S$ can have non-zero nilpotent element. By the classification of finite groups whose integral group rings do not have non-zero nilpotent elements \cite{Seh75}, $S$ must be abelian, i.e., $G$ is an abelian group of exponent $6$. 
	
	Conversely, if $G$ is an abelian group of exponent $6$, then $\mathcal{V}(\mathbb{Z}G)=G$ and therefore $\mathcal V_\omega (\mathbb{Z}G)=\{1\}$. Also, if $G$ is a $p$-group, then by Theorem \ref{Delta0}, $  \Delta^\omega(G)=\{0\}$, and hence $\mathcal V_\omega (\mathbb{Z}G)=\{1\}.$ 
\end{proof}

As already observed in (\ref{eq:bicyclic}), the existence of non-trivial bicyclic units\linebreak  $u_{g,\,h}\in \mathbb{Z}G$, with the elements  $g,\,h\in G$ of relatively prime orders, implies that the\linebreak   $\Delta$-adic residue of $\mathcal{V}(\mathbb{Z}G)$ is non-trivial. Thus,  if $G$ is a group with $\mathcal{V}_\omega(\mathbb{Z}G)=\{1\}$, then either $G$ does not have elements of  relatively prime orders, or every \linebreak bicyclic unit $u_{g,\,h}$, with the elements $g,\, h$ of relatively prime orders, is trivial. 
\\

It may be noted  that the triviality of the $\Delta$-adic residue of  $\mathcal{V}(\mathbb{Z}G)$, for an \linebreak  arbitrary (not necessarily finite) group $G$, has an impact on its   torsion elements. 
Following the arguments of Theorem \ref{Finite_TrivialResidue}, we  record the following result which   brings out  constraints on torsion elements of groups with the property under consideration.

\begin{theorem}\label{2,3_only}
	If $G$ is a group with $\Delta$-adic residue of $\mathcal{V}(\mathbb{Z}G)$ trivial, then $G$ cannot have an element of order $pq$ with primes $p<q$, except possibly when $(p,\,q)= (2,\,3)$; 
	in  particular, if the group $G$ is either $2$-torsion-free  or $3$-torsion-free,  then every torsion element of  $G$ has prime-power order.\end{theorem}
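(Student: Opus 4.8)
The plan is to reduce the general (possibly infinite) statement to the finite cyclic case already handled inside the proof of Theorem \ref{Finite_TrivialResidue}, using the fact that triviality of the $\Delta$-adic residue is inherited by subgroups. Concretely, suppose $G$ has an element $z$ of order $pq$ with primes $p<q$. First I would pass to the cyclic subgroup $H=\langle z\rangle$ of order $pq$. Since $\mathcal{V}_\omega(\mathbb{Z}G)=\{1\}$ and $H$ is a subgroup, I would argue that $\mathcal{V}_\omega(\mathbb{Z}H)=\{1\}$ as well: any unit in $\mathbb{Z}H$ lies in $\mathbb{Z}G$, and the augmentation powers of $H$ map into those of $G$ under the inclusion, so a unit of $\mathbb{Z}H$ congruent to $1$ modulo every $\Delta^n(H)$ would remain in $\mathcal{V}_\omega(\mathbb{Z}G)$. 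This reduces matters to the finite cyclic group $H$.

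Now $H$ is a finite group with trivial $\Delta$-adic residue, so Theorem \ref{Finite_TrivialResidue} applies directly. The classification there forces $H$ to be either an abelian group of exponent $6$ or a $p$-group. But $H$ is cyclic of order $pq$, which is not a prime power since $p\neq q$; hence $H$ cannot be a $p$-group. Therefore $H$ must have exponent dividing $6$, and since $\lvert H\rvert=pq$ this is only possible when $\{p,q\}=\{2,3\}$, i.e.\ $(p,q)=(2,3)$. This establishes that $G$ cannot contain an element of order $pq$ with $p<q$ unless $(p,q)=(2,3)$, which is exactly the first assertion.

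For the ``in particular'' clause, suppose $G$ is $2$-torsion-free or $3$-torsion-free, and let $t$ be any torsion element of $G$ whose order is not a prime power. Then the order of $t$ has at least two distinct prime divisors $p<q$, and by choosing a suitable power of $t$ I can produce an element of order exactly $pq$ (take $t$ raised to the order divided by $pq$, after selecting the two smallest, or indeed any two, prime divisors). By the first part, $(p,q)$ must be $(2,3)$, so $t$ has an element of order $2$ and an element of order $3$ among its powers. This contradicts the hypothesis that $G$ is $2$-torsion-free (no element of order $2$) or $3$-torsion-free (no element of order $3$). Hence every torsion element has prime-power order.

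The main obstacle I anticipate is the first step: justifying cleanly that $\mathcal{V}_\omega$ is subgroup-closed, i.e.\ that passing from $G$ to $H$ preserves the triviality of the residue. One must be careful that the inclusion $\mathbb{Z}H\hookrightarrow\mathbb{Z}G$ sends $\Delta^n(H)$ into $\Delta^n(G)$ (which is immediate, as $\Delta(H)\subseteq\Delta(G)$) and that a normalized unit of $\mathbb{Z}H$ is a normalized unit of $\mathbb{Z}G$; both are routine, and the excerpt already invokes exactly this subgroup-closure in the proof of Theorem \ref{Finite_TrivialResidue} when it writes ``The cyclic subgroup $H:=\langle z\rangle$ of $G$ satisfies $\mathcal{V}_\omega(\mathbb{Z}H)=\{1\}$.'' So the heavy lifting is all contained in Theorem \ref{Finite_TrivialResidue}, and the present theorem is essentially a localization of that result to a single cyclic subgroup followed by a short order-arithmetic argument.
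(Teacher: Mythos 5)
Your proposal is correct and matches the paper's intended argument: the paper proves this theorem precisely by ``following the arguments of Theorem \ref{Finite_TrivialResidue}'', i.e.\ restricting to the cyclic subgroup $\langle z\rangle$ of order $pq$ (using that triviality of the $\Delta$-adic residue passes to subgroups) and ruling out $(p,q)\neq(2,3)$ via the finite-group analysis. Your only deviation is cosmetic: you invoke the statement of Theorem \ref{Finite_TrivialResidue} as a black box rather than re-running its internal rank computation for $\mathcal{V}(\mathbb{Z}\langle z\rangle)$, which is equally valid since that theorem is already established.
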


Recall that an element $g$ of a group $G$  is said to have {\it infinite $p$-height} in $G$ if, for every choice of natural numbers  $i$ and $j$, there exist elements $x\in G$ and $y\in \gamma_j(G)$ such that $x^{p^i}=gy.$  The set of elements of infinite $p$-height in $G$ forms a normal sugroup; we denote it by $G(p)$.

\begin{theorem}\label{nil}
	Let $G$ be a nilpotent group with $\mathcal{V}_\omega(\mathbb{Z}G)=\{1\}$, and  let $T$ be its torsion subgroup. Then one of the following staements  holds:
	\begin{description}
		\item[(i)] $T=\{1 \}; $
		\item [(ii)] $T$ is a $(2,3)$ group of exponent 6; 
		\item[(iii)] $T$ is a $p$-group,  $T(p)\neq T$, and $T(p)$ is an abelian $p$-group of exponent at most $4$.
	\end{description}

In particular, if $G$ is a nilpotent group with its torsion subgroup  $\{2,\ 3\}$-torsion-free, then, $\mathcal{V}(\mathbb{Z}G)$ has trivial $\Delta$-adic residue only if either $G$ is a torsion-free group or its  torsion subgroup is a $p$-group which has no element of infinite $p$-height.
\end{theorem}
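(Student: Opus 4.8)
The plan is to exploit that triviality of the $\Delta$-adic residue is inherited by subgroups, so that every subgroup $H\le G$ satisfies $\mathcal V_\omega(\mathbb ZH)=\{1\}$, and to run the prime-by-prime analysis of Theorem \ref{Finite_TrivialResidue}, now carried out on the torsion subgroup $T$. Since $G$ is nilpotent, $T$ is a characteristic subgroup and decomposes as the restricted direct product of its primary components $T_p$; in particular torsion elements of coprime order commute. I would also record the elementary fact that $D_\omega(G)\subseteq \mathcal V_\omega(\mathbb ZG)=\{1\}$, so that $G$ has trivial dimension residue as well.

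First I would pin down the set of primes occurring in $T$. If two distinct primes $p<q$ divided the orders of elements of $T$, then choosing commuting elements of orders $p$ and $q$ from $T_p$ and $T_q$ yields an element of order $pq$, so Theorem \ref{2,3_only} forces $(p,q)=(2,3)$. Hence either $T$ is a $p$-group, or $T$ is a $\{2,3\}$-group involving both $2$ and $3$. The $\{2,3\}$-case gives (ii): if the exponent of $T$ were not $6$, then $T_2$ would contain an element of order $4$ or $T_3$ an element of order $9$, and pairing it with an element of order $3$ (respectively $2$) of coprime order produces inside $T$ a finite subgroup isomorphic to $C_4\times C_3\cong C_{12}$ or $C_2\times C_9\cong C_{18}$. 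Each of these is finite and is neither an abelian group of exponent $6$ nor a $p$-group, so by Theorem \ref{Finite_TrivialResidue} it has non-trivial $\Delta$-adic residue, contradicting the subgroup-closed property. Thus $T_2$ has exponent $2$ and $T_3$ exponent $3$, so $T$ has exponent $6$.

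The heart of the argument, and the main obstacle, is the $p$-group case, where I must establish both $T(p)\neq T$ and that $T(p)$ is an abelian $p$-group of exponent at most $4$. Here the finite classification no longer suffices, since every finite $p$-group already has trivial residue; the genuine obstructions arise from the interaction of elements of infinite $p$-height with the $\Delta$-adic filtration. I would first translate infinite $p$-height into the statement that $g\in T(p)$ is, modulo each $\gamma_j(G)$, a $p^i$-th power for every $i$, forcing $g$ to be approximable by $p^i$-th powers of elements of unboundedly large order. The plan is then to show that each of the three failures — $T(p)=T$, an element of $T(p)$ of order exceeding $4$, or non-commuting elements of $T(p)$ — allows one to manufacture a non-trivial unit lying in $\mathcal V_\omega(\mathbb ZG)$. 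The expected mechanism is to use Bass units (respectively bicyclic-type units) built from elements of arbitrarily large $p$-height, which converge to $1$ in the $\Delta$-adic topology, in conjunction with the residual-nilpotence constraints on $\mathcal V(\mathbb ZG)$ for nilpotent $G$ and the classification of finite $p$-groups with nilpotent unit group (namely $K_8$ and abelian groups of exponent dividing $4$). The hard part will be producing these residue units explicitly and controlling heights in a possibly infinite, possibly non-abelian nilpotent group; this is where the real technical work lies, and where I would lean on the height/dimension-series machinery of \cite{MP09} and \cite{Pas79}.

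Finally, the \emph{in particular} assertion follows formally from the trichotomy. If $T$ is $\{2,3\}$-torsion-free then case (ii) is impossible, and in case (iii) the relevant prime satisfies $p\ge 5$, so that a $p$-group of exponent at most $4$ must be trivial; hence $T(p)=\{1\}$. Therefore either $T=\{1\}$, i.e. $G$ is torsion-free, or $T$ is a $p$-group with no non-trivial element of infinite $p$-height, as claimed.
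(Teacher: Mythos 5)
Your reduction of the set of primes via Theorem \ref{2,3_only}, your treatment of the $(2,3)$-case through finite subgroups $C_{12}$ and $C_{18}$ and Theorem \ref{Finite_TrivialResidue}, and your derivation of the \emph{in particular} clause from the trichotomy all match the paper and are fine. The problem is that you have not actually proved case (iii), which is the substance of the theorem: you state a ``plan'' to manufacture units in $\mathcal V_\omega(\mathbb ZG)$ from Bass or bicyclic units built on elements of large $p$-height and explicitly defer ``the real technical work.'' That deferred work is precisely the proof, and the mechanism you gesture at (Bass/bicyclic units converging $\Delta$-adically to $1$, plus the classification of finite $p$-groups with nilpotent unit group) is not the one that closes the argument; indeed $T(p)$ need not be finite, and what is needed is not nilpotence of a unit group but the much stronger conclusion that $T(p)$ is an abelian cut-group.

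The missing idea is the following cited fact (\cite{Pas79}, Theorem 2.3, p.\,97): if $x,y$ are elements of infinite $p$-height in $T$, then $(x-1)(y-1)\in\Delta^\omega(T)$. This immediately gives $\mathcal V_2(\mathbb Z[T(p)])\subseteq\mathcal V_\omega(\mathbb ZT)$, and since $\mathcal V_\omega(\mathbb ZT)=\{1\}$ (subgroup-closedness, as you note), one gets $\mathcal V_2(\mathbb Z[T(p)])=\{1\}$ and hence, by the decomposition $\mathcal V(\mathbb ZH)=H\,\mathcal V_2(\mathbb ZH)$ of (\ref{v2}), that $\mathcal V(\mathbb Z[T(p)])=T(p)$, i.e.\ all normalized units of $\mathbb Z[T(p)]$ are trivial. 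Combining this with Malcev's theorem that $T(p)$ is central in $T$ shows $T(p)$ is an abelian $p$-group all of whose units are trivial, hence an abelian cut $p$-group, hence of exponent $2$ or $4$ (and then $T\neq T(p)$ follows, since a nontrivial group of exponent dividing $4$ cannot consist entirely of elements of infinite $p$-height). Without this containment $\Delta(T(p))^2\subseteq\Delta^\omega(T)$ --- or some substitute for it --- your outline does not yield a proof.
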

\begin{proof} 	Let $G$ be a nilpotent group with $\mathcal{V}_\omega(\mathbb{Z}G)=\{1\}$, and  let $T$ be its torsion subgroup. If $G$ is not torsion-free, then, in view of Theorem  \ref{2,3_only}, either $T$ is a $p$-group, or must be a $(2,\,3)$ group.  Moreover, in the latter case, argunig as in Theorem \ref{Finite_TrivialResidue}, it follows  that $T$ must be a $(2,\,3)$ group of exponent $6$.

	Suppose $T\ (\not=\{1\})$ is a $p$-group, $T(p) \neq \{1 \}$ and let $x,\,y\in T(p)$. Then  (\cite{Pas79}, Theorem 2.3, p.\,97)  $$(x-1)(y-1)\in \Delta^\omega(T).$$
	Consequently, $${\mathcal{V}_2(\mathbb{Z}[T(p)])\subseteq \mathcal{V}_\omega(\mathbb ZT).}$$ Since $\mathcal{V}_\omega(\mathbb ZT)=\{1	\}$, we have  $$\mathcal{V}_2(\mathbb Z[T(p)])=\{1 \}.$$ Therefore,  by (\ref{v2}), $$\mathcal{V}(\mathbb Z[T(p)])=T(p).$$  Moreover, by (\cite{Mal49}, Theorem 1),   $T(p)$ is a central  subgroup of $T$.  Thus  $T(p)$ is a $p$-group which is an abelian cut-group and thus, its exponent is $2$, $3$ or $4$ and  $T\neq T(p)$.
\end{proof}

\begin{remark}Towards converse of the above result, it may be mentioned that 
	if $G$ is a nilpotent group which is either (i) torsion-free, or (ii) finitely generated having no $p'$-element for some prime $p$, or (iii) $G(p)=\{1\}$, then $\Delta^\omega(G)=\{0\}$, and hence $\mathcal V_\omega(\mathbb ZG)=\{1\}$.

\end{remark}
 	\begin{remark}
	The triviality of the $\Delta$-adic residue $\mathcal V_\omega(\mathbb ZG)$ does not, in general, imply the triviality of $\Delta^\omega(G)$. For instance,  observe that\\ \centerline{$\mathcal{V}(\mathbb{Z}[C_2\oplus {Z}])= C_2\oplus {Z}$ ({\cite{Seh78}, p.\,57}),}\\
and therefore,  $$\mathcal{V}(\mathbb{Z}[C_2\oplus Q])= C_2\oplus  Q\simeq \mathcal{V}(\mathbb{Z}C_2)\oplus \mathcal{V}(\mathbb{Z} Q),$$ where $Z$ is an infinite cyclic group and $Q$ is the additive group of rationals.\linebreak Consequently, $$\mathcal{V}_\omega(\mathbb{Z}[C_2\oplus  Q])=\{1\}.$$ On the other hand, it is easy to see that  $\Delta^\omega(C_2\oplus  Q)\not=\{0\}$. \end{remark}

We next proceed to analyse the case of abelian groups. 

\begin{theorem}\label{abelian}Let $G$ be an abelian group and let $T$ be its torsion subgroup. Then, $\mathcal V_\omega(\mathbb ZG)=\{1\}$ if, and only if, $\mathcal V_\omega(\mathbb ZT)=\{1\}$.  \end{theorem}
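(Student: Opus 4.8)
The plan is to establish the two implications separately, exploiting the direct-sum splitting of the unit group induced by the projection $G \to T$ and the splitting $G \to G/T$ onto a torsion-free abelian quotient. The key structural input is that for an abelian group $G$, the integral group ring decomposes compatibly with the decomposition $G \cong T \times (G/T)$ when $G/T$ is torsion-free (in fact free abelian up to taking direct limits, since a torsion-free abelian group is a direct limit of free abelian groups). I would first record the easy implication: since $T$ is a subgroup of $G$ and the property ``$\mathcal V_\omega(\mathbb ZH) = \{1\}$'' is subgroup-closed (as noted at the start of Section~3), triviality of $\mathcal V_\omega(\mathbb ZG)$ immediately forces $\mathcal V_\omega(\mathbb ZT) = \{1\}$. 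So the content is entirely in the converse.

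For the converse, assume $\mathcal V_\omega(\mathbb ZT) = \{1\}$ and aim to show $\mathcal V_\omega(\mathbb ZG) = \{1\}$. The first step is to reduce to the finitely generated case: any unit $u \in \mathcal V(\mathbb ZG)$ has finite support, so it lives in $\mathbb Z G_0$ for some finitely generated subgroup $G_0 \le G$, and if $u \in \mathcal V_\omega(\mathbb ZG)$ one checks that the relevant augmentation-power membership can be tested inside a finitely generated subgroup containing the support and enough of $T$. Thus it suffices to treat $G$ finitely generated abelian, where $G = T \oplus F$ with $T$ finite and $F$ free abelian of finite rank. The second step is to analyze $\mathbb Z G = \mathbb Z[T \oplus F] = (\mathbb Z T)[F]$, i.e. a Laurent polynomial ring over $\mathbb Z T$ in the free generators of $F$. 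The plan is to show that a normalized unit in this Laurent polynomial ring lying in $1 + \Delta^\omega(G)$ must actually be a normalized unit of $\mathbb Z T$ (together with a trivial unit coming from $F$), using that $F$ is torsion-free and $\Delta$-adically well-behaved: the augmentation ideal of $F$ intersects the powers trivially ($\Delta^\omega(F) = \{0\}$ since $F$ is residually torsion-free nilpotent), so the ``$F$-part'' of any $\Delta$-adic residue unit collapses.

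The technical heart of the argument is the interaction between the $T$-direction and the $F$-direction inside $\Delta^\omega(G)$. The cleanest route is to use the splitting $\mathcal V(\mathbb ZG) = G \oplus \mathcal V_2(\mathbb ZG)$ from~(\ref{T2}) together with the projection homomorphism $\pi \colon \mathbb Z G \to \mathbb Z T$ induced by $F \to \{1\}$ and the inclusion $\iota \colon \mathbb Z T \hookrightarrow \mathbb Z G$. Since $\pi$ maps $\Delta^n(G)$ into $\Delta^n(T)$, it carries $\mathcal V_\omega(\mathbb ZG)$ into $\mathcal V_\omega(\mathbb ZT) = \{1\}$; so any $u \in \mathcal V_\omega(\mathbb ZG)$ satisfies $\pi(u) = 1$, meaning $u$ lies in the kernel of $\pi$ restricted to units, i.e. $u \equiv 1$ modulo the relative augmentation ideal $\Delta(G,F) = \mathbb Z G \cdot \Delta(F)$. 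The remaining task is to show that a normalized unit $u$ with $u \in 1 + \Delta(G,F)$ and $u \in 1 + \Delta^\omega(G)$ must equal $1$; here I would argue that the free-abelian direction contributes no nontrivial residue, invoking $\Delta^\omega(F) = \{0\}$ and the fact that over the (finite) coefficient ring $\mathbb Z T$ the Laurent polynomial ring $\mathbb Z T[F]$ has its $F$-augmentation powers intersecting in zero after tensoring appropriately.

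The step I expect to be the main obstacle is precisely this last one: controlling $\Delta^\omega(G)$ when $G = T \oplus F$ mixes a finite torsion part with a free abelian part, since $\Delta^\omega(G)$ need not be zero (the torsion of $T$ can create nonzero $\Delta$-adic residue, as the $C_2 \oplus Q$ remark illustrates), so one cannot simply conclude $\Delta^\omega(G) = \{0\}$. The delicate point is to show that although $\Delta^\omega(G)$ may be nonzero, every \emph{unit} of the form $1 + \alpha$ with $\alpha \in \Delta^\omega(G)$ that additionally projects trivially to $\mathbb Z T$ is forced to be trivial. I would handle this by reducing modulo each prime power dividing $|T|$ and using that a torsion-free abelian group has trivial $\Delta$-adic residue over any field, combined with the fact established in (\ref{T2}) that $\mathcal V_2(\mathbb ZT)$ is torsion-free, to pin down the structure of $\mathcal V_\omega(\mathbb ZG)$ as being detected entirely by $\mathcal V_\omega(\mathbb ZT)$.
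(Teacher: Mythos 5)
Your easy direction and the observation that the projection $\pi\colon\mathbb ZG\to\mathbb ZT$ carries $\mathcal V_\omega(\mathbb ZG)$ into $\mathcal V_\omega(\mathbb ZT)=\{1\}$ are both fine, but the step you yourself flag as the ``technical heart'' is a genuine gap, and the sketch you give for it does not close it. After concluding $\pi(u)=1$ you must show that a normalized unit $u$ with $u-1\in\Delta^\omega(G)\cap\mathbb ZG\,\Delta(F)$ is trivial. The difficulty is that this intersection of ideals is genuinely nonzero whenever $\Delta^\omega(T)\neq\{0\}$: already for $T=C_6=\langle x\rangle\times\langle y\rangle$ (a case the theorem must cover, since $\mathcal V_\omega(\mathbb ZC_6)=\{1\}$) one has $(x-1)(y-1)\in\Delta^\omega(T)$, hence $(x-1)(y-1)(t-1)\in\Delta^\omega(G)\cap\mathbb ZG\,\Delta(F)$ for $t\in F$. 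So no ideal-theoretic vanishing argument (``the $F$-part collapses because $\Delta^\omega(F)=\{0\}$'', or reduction modulo primes dividing $|T|$) can work; you must use the structure of the unit group itself. The missing ingredient is precisely Sehgal's decomposition $\mathcal V(\mathbb ZG)=\mathcal V(\mathbb ZT)G$ for abelian $G$, which is what the paper invokes: writing $u=vg$ and projecting onto the torsion-free quotient $G/T$ (where $\Delta^\omega$ vanishes) forces $g\in T$ and hence $u\in\mathcal V(\mathbb ZT)$, after which the identity $\mathcal V(\mathbb ZT)\cap\mathcal V_n(\mathbb ZG)=\mathcal V_n(\mathbb ZT)$ --- proved for each fixed $n$ by reducing to finitely generated groups and using that these split over their torsion subgroups --- finishes the argument. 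Note that the paper projects onto $G/T$, not onto $T$ as you do; your choice of projection is not wrong, but it leaves you with the harder complementary problem.

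A secondary issue: your opening reduction to the finitely generated case has the quantifiers in the wrong order. For $u\in\mathcal V_\omega(\mathbb ZG)$, each individual membership $u-1\in\Delta^n(G)$ is witnessed inside some finitely generated subgroup $G_n$, but these subgroups grow with $n$, so there need not exist a single finitely generated $G_0$ with $u-1\in\Delta^\omega(G_0)$. The paper avoids this by carrying out the projection arguments over the full (possibly infinitely generated) group and reducing to finitely generated subgroups only for the fixed-$n$ statement $\mathcal V(\mathbb ZT)\cap\mathcal V_n(\mathbb ZG)=\mathcal V_n(\mathbb ZT)$, intersecting over $n$ afterwards. You would need to restructure your argument in the same way.
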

\begin{proof}
	Let $G$ be an abelian group and let $T$ be its torsion subgroup. Clearly, if $\mathcal V_\omega(\mathbb ZG)=\{1\}$, then $\mathcal V_\omega(\mathbb ZT)=\{1\}$. 
	
	For the converse, let $u\in \mathcal{V}_\omega(\mathbb ZG)$.
	Then, as $\mathcal V(\mathbb ZG)=\mathcal V(\mathbb ZT)G$ (\cite{Seh78}, p.\,56), we have that $u=vg$, with $v\in  \mathcal V(\mathbb ZT),\ g\in G$. Since  $\mathcal{V}_\omega(\mathbb Z[G/T])=\{1\}$, $G/T$ being torsion-free, projecting $u$ onto $G/T$ yields $g\in T$, and hence $u\in \mathcal V(\mathbb ZT).$ The result now follows from the fact that, $$\mathcal V(\mathbb ZT)\cap \mathcal V_\omega(\mathbb ZG)=\mathcal V_\omega(\mathbb ZT).$$ This is because $$\mathcal V(\mathbb ZT)\cap \mathcal V_n(\mathbb ZG)=\mathcal V_n(\mathbb ZT),\ \text{for all}\ n\geq 1.$$ Note that for the last assertion, it is enough to prove for finitely generated abelian groups, and there it follows from the fact that the group splits over its torsion subgroup.
	\end{proof}

 Theorems \ref{nil} and \ref{abelian} yield  the following: 

\begin{cor}\label{Abelian}
	For an abelian group $G$ which is $\{2,\,3\}$-torsion-free, $\mathcal V(\mathbb ZG)$ has trivial $\Delta$-adic residue  if, and only if, it is either  torsion-free or its torsion subgroup is a $p$-group which has no element of infinite $p$-height.
\end{cor}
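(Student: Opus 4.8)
The plan is to deduce both directions directly from Theorem \ref{nil} and Theorem \ref{abelian}, exploiting that an abelian group is in particular nilpotent and that a $\{2,3\}$-torsion-free abelian group has a $\{2,3\}$-torsion-free torsion subgroup $T$.

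For the ``only if'' direction I would argue as follows. Assuming $\mathcal V(\mathbb ZG)$ has trivial $\Delta$-adic residue, I apply Theorem \ref{nil} to $G$ (which is nilpotent) and inspect its three alternatives against the hypothesis. Alternative (ii), that $T$ be a nontrivial $(2,3)$-group of exponent $6$, is incompatible with $T$ being $\{2,3\}$-torsion-free and is thus excluded. In alternative (iii), $T$ is a $p$-group with $T(p)$ an abelian $p$-group of exponent at most $4$; since $T$ is $\{2,3\}$-torsion-free we have $p\geq 5$, and a $p$-group of exponent dividing $4$ with $p\geq 5$ is trivial, so $T(p)=\{1\}$, i.e. $T$ has no element of infinite $p$-height. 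Together with alternative (i) (that is, $T=\{1\}$, so $G$ is torsion-free) this is exactly the stated dichotomy; indeed it is precisely the ``in particular'' clause of Theorem \ref{nil} specialised to the abelian case.

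For the ``if'' direction I would first invoke Theorem \ref{abelian} to reduce the vanishing of $\mathcal V_\omega(\mathbb ZG)$ to that of $\mathcal V_\omega(\mathbb ZT)$. If $G$ is torsion-free then $T=\{1\}$ and the claim is immediate. If instead $T$ is a $p$-group with no element of infinite $p$-height, so that $T(p)=\{1\}$, then the Remark following Theorem \ref{nil} (alternative (iii)) gives $\Delta^\omega(T)=\{0\}$ and hence $\mathcal V_\omega(\mathbb ZT)=\{1\}$; Theorem \ref{abelian} then returns $\mathcal V_\omega(\mathbb ZG)=\{1\}$.

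Since the substantive work is already carried out in Theorems \ref{nil} and \ref{abelian}, the proof is largely bookkeeping. The only points needing attention are the degenerate case check in the forward direction (that the exponent-$\leq 4$ $p$-group $T(p)$ collapses to $\{1\}$ once $p\geq 5$), and, in the converse, the observation that Theorem \ref{nil} supplies only the ``only if'' implication, so the residual property $\Delta^\omega(T)=\{0\}$ must be drawn from the Remark rather than from Theorem \ref{nil} itself.
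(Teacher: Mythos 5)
Your proof is correct and takes essentially the same route as the paper, which states the corollary as an immediate consequence of Theorems \ref{nil} and \ref{abelian} without writing out the details. The bookkeeping you supply --- discarding alternative (ii) of Theorem \ref{nil}, collapsing the exponent-$\le 4$ group $T(p)$ to $\{1\}$ when $p\ge 5$, and, for the converse, drawing $\Delta^\omega(T)=\{0\}$ from the Remark (since Theorem \ref{nil} only gives the forward implication) before applying Theorem \ref{abelian} --- is exactly the intended deduction.
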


\begin{remark}
	If $G$ is a nilpotent group and $T$ is the torsion subgroup of $G$ such that idempotents in $\mathbb QT$ are central in $\mathbb QG$, then $$\mathcal V(\mathbb ZG)=\mathcal V(\mathbb ZT)G. $$The above requirement on idempotents holds true, for instance, if $\mathbb QG$ has no non-zero nilpotent elements (\cite{Seh78}, p.\,194). 
\end{remark}
\begin{prop}
Let $G$ be a nilpotent group and  $T$  its torsion subgroup. If $G/T$ is finitely generated, then $$
\mathcal V_\omega(\mathbb ZG)\cap \mathbb ZT=\mathcal V_\omega(\mathbb ZT).
$$

\end{prop}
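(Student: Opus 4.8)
The plan is to prove the set-theoretic equality $\mathcal V_\omega(\mathbb ZG)\cap \mathbb ZT=\mathcal V_\omega(\mathbb ZT)$ by establishing the two inclusions separately, with the nontrivial direction being $\mathcal V_\omega(\mathbb ZG)\cap \mathbb ZT\subseteq \mathcal V_\omega(\mathbb ZT)$. First I would dispose of the reverse inclusion: if $u\in\mathcal V_\omega(\mathbb ZT)$ then $u\in\mathbb ZT$ trivially, and $u\in\mathcal V_\omega(\mathbb ZT)\subseteq\mathcal V_\omega(\mathbb ZG)$ follows because the inclusion $\mathbb ZT\hookrightarrow\mathbb ZG$ carries $\Delta^n(T)$ into $\Delta^n(G)$ for every $n$, so $u-1\in\Delta^\omega(T)$ forces $u-1\in\Delta^\omega(G)$. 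No hypothesis on $G/T$ is needed here.

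For the forward inclusion, take $u\in\mathcal V_\omega(\mathbb ZG)$ with $u\in\mathbb ZT$. The key point to establish is that $u-1$ lying in every $\Delta^n(G)$ already forces it to lie in every $\Delta^n(T)$, \emph{given} that $u$ is supported on $T$. The natural tool is the analogue, at the level of augmentation powers, of the splitting $\mathcal V(\mathbb ZT)\cap\mathcal V_n(\mathbb ZG)=\mathcal V_n(\mathbb ZT)$ used in the proof of Theorem \ref{abelian}; one wants $\mathbb ZT\cap\Delta^n(G)=\Delta^n(T)$ for each $n$. The inclusion $\Delta^n(T)\subseteq\mathbb ZT\cap\Delta^n(G)$ is clear, so the crux is the opposite inclusion $\mathbb ZT\cap\Delta^n(G)\subseteq\Delta^n(T)$.

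The mechanism I would use to force the reverse inclusion is a projection or retraction argument, and this is where the hypothesis that $G/T$ is finitely generated enters. Since $G$ is nilpotent with torsion subgroup $T$, the quotient $G/T$ is a finitely generated torsion-free nilpotent group. The idea is to exhibit a ring retraction $\pi:\mathbb ZG\to\mathbb ZT$ that is compatible with the augmentation filtration in the sense that $\pi(\Delta^n(G))\subseteq\Delta^n(T)$ and that restricts to the identity on $\mathbb ZT$; then for $u\in\mathbb ZT\cap\Delta^n(G)$ we get $u=\pi(u)\in\pi(\Delta^n(G))\subseteq\Delta^n(T)$, exactly as desired. To build such a $\pi$ one reduces to the finitely generated case: writing $G/T$ as an iterated extension by infinite cyclic groups, one uses the fact that a finitely generated torsion-free nilpotent group splits off from $G$ in a controlled way, so that $G$ has a normal complement-type structure relative to $T$ permitting the retraction. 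The main obstacle is precisely constructing this filtration-respecting retraction $\pi$ and verifying $\pi(\Delta^n(G))\subseteq\Delta^n(T)$; the finite generation of $G/T$ is what makes the complement available and lets one induct on the Hirsch length, reducing to the single infinite-cyclic extension case where $\Delta^n$ behaves well under the projection killing the free generator. Once $\mathbb ZT\cap\Delta^n(G)=\Delta^n(T)$ is in hand for all $n$, intersecting over $n$ gives $\mathbb ZT\cap\Delta^\omega(G)=\Delta^\omega(T)$, and translating back to units yields $u\in\mathcal V_\omega(\mathbb ZT)$, completing the proof.
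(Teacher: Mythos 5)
Your reduction of the problem to the statement $\mathbb ZT\cap\Delta^n(G)=\Delta^n(T)$ for all $n$, to be proved via a filtration-preserving ring retraction $\pi:\mathbb ZG\to\mathbb ZT$, does not work, and this is precisely where your argument breaks. First, the level-by-level equality is false in general: take $G=\langle x,y\mid [x,y]=z\ \text{central},\ z^2=1\rangle$, a nilpotent group of class $2$ with $T=\langle z\rangle\cong C_2$ and $G/T$ free abelian of rank $2$, hence finitely generated. Then $z-1=[x,y]-1\in\Delta^2(G)$, while $\Delta^2(T)=2\Delta(T)$ does not contain $z-1$; so $\mathbb ZT\cap\Delta^2(G)\neq\Delta^2(T)$. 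The same example kills the proposed retraction: a complement to $T$ in $G$ would be a subgroup meeting $T$ trivially and mapping isomorphically onto the abelian group $G/T$, yet any preimages of the two generators have commutator $z\neq 1$; so $G$ does not split over $T$ even though $G/T$ is finitely generated torsion-free nilpotent. And even in the one-step situation $G=H\rtimes Z$ (which does always split, the quotient being free), the set-theoretic projection $hz^k\mapsto h$ is not a ring homomorphism unless $Z$ centralizes $H$, so no filtration-preserving ring retraction is available there either.

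The paper's proof avoids both obstacles. It filters $G$ by a chain $T\lhd H_1\lhd\cdots\lhd H_n=G$ with infinite cyclic quotients and, at each step $G=HZ$, uses not a ring retraction but the $\mathbb ZG$-module structure on $\mathbb ZH$ in which $Z$ acts by conjugation and $H$ by left multiplication: Swan's Lemma gives $^{\Delta^{m^{c}}(G)}\mathbb ZH\subseteq\Delta^m(H)$ for $G$ of class $c$, and evaluating the additive (not multiplicative) map $u\mapsto{}^{u}1$ at an element $u\in\mathcal V_\omega(\mathbb ZG)\cap\mathbb ZH$ yields $u-1={}^{u-1}1\in\Delta^m(H)$ for every $m$. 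Note the re-indexing $m\mapsto m^{c}$: one only obtains $\mathbb ZH\cap\Delta^{m^{c}}(G)\subseteq\Delta^m(H)$, which is weaker than your level-by-level identity but exactly enough after intersecting over all $m$. Your easy inclusion and the overall induction on the chain of infinite cyclic extensions are fine; to repair the proof, replace the retraction by this module-theoretic evaluation argument and accept the degree shift.
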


\begin{proof}
	Let $G$ be a nilpotent group of class $c$, $Z$ an infinite cyclic group, and   $$1\to H\to G\to Z\to 1$$  a split exact sequence, so that  $$G=HZ,\  H\lhd G,\ H\cap Z=\{1\}.$$ Regard the integral group ring $\mathbb ZH$ as a left $\mathbb ZG$-module with $Z$ acting on $H$ by conjugation and $H$ acting on $\mathbb ZH$ by left multiplication. Then, we have by Swan's Lemma (see \cite{Pas79}, Theorem 2.3, p.\,79),
	\begin{equation}\nonumber\label{module}
	 ^{\Delta^{m^{c}}(G)}\mathbb ZH\subseteq \Delta^m(H),\ \text{for all $m\geq 1$}.
\end{equation}
	Now if $u=n_1g_1+n_2g_2+\ldots n_rg_r\in \mathbb ZG$ is an element of augmentation $1$, and $g_i=h_iz_i$, with $h_i\in H,\ z_i\in Z$, then we have $$^{u-1}1=\sum n_i(h_i-1)\in \Delta(H).$$ Consequently, if $u\in \mathcal V_\omega(\mathbb ZG)$, then$$^{u-1}1\in \Delta^\omega(H).$$
	 It thus follows that we have 
	\begin{equation}\nonumber
	\mathcal V_\omega(\mathbb ZG)\cap \mathbb ZH=\mathcal V_\omega(\mathbb ZH).
	\end{equation}
	
	Next, let $T$ be torsion subgroup of $G$. If $G/T$ is finitely generated, then we have a series $$T\lhd H_1\lhd H_2\lhd \ldots\lhd H_n=G$$ with $H_{i+1}/H_i$ infinite cyclic, $1\leq i\leq n-1$. Induction yields the  desired result, i.e., \begin{equation}\nonumber
	\mathcal V_\omega(\mathbb ZG)\cap \mathbb ZT=\mathcal V_\omega(\mathbb ZT).
	\end{equation}
\end{proof}	

We next shift our analysis to the quotients of a residually nilpotent group by elements of lower central series.

\begin{theorem}\label{nilpotentquotient}
	Let $G$ be a residually nilpotent group, and  let $\{\gamma_{n}(G)\}_{n\geq 1}$ be its lower central series. If 
	$\mathcal V_\omega(\mathbb Z[G/\gamma_n(G)])=\{1\}$ for all $n\geq 1$, then  $\mathcal V_\omega(\mathbb ZG)=\{1\}.$
\end{theorem}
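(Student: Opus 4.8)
The plan is to push a hypothetical nontrivial element of $\mathcal V_\omega(\mathbb ZG)$ down through the family of quotient maps $\pi_n\colon \mathbb ZG\to \mathbb Z[G/\gamma_n(G)]$ induced by the natural projections $G\to G/\gamma_n(G)$, land in quotients where the $\Delta$-adic residue is known to vanish, and then exploit residual nilpotence to pull the conclusion back up to $G$.

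First I would fix an arbitrary $u\in \mathcal V_\omega(\mathbb ZG)$ and record that each $\pi_n$ preserves augmentation and carries $\Delta^k(G)$ into $\Delta^k(G/\gamma_n(G))$ for every $k$: this is immediate since $\Delta(G)$ is spanned by the elements $g-1$, which map to $\pi_n(g)-1\in\Delta(G/\gamma_n(G))$, and $\Delta^k(G)$ is spanned by products of $k$ such elements. Hence $\pi_n$ maps $\mathcal V_\omega(\mathbb ZG)$ into $\mathcal V_\omega(\mathbb Z[G/\gamma_n(G)])$, and the hypothesis forces $\pi_n(u)=1$ for every $n\ge 1$.

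The crux is then to upgrade the vanishing of all nilpotent-quotient images to the triviality of $u$ itself, and here I would use the finiteness of the support of $u$ together with residual nilpotence. Writing $u=\sum_{g\in S}a_g\,g$ with $S$ finite, and using that $\bigcap_n\gamma_n(G)=\{1\}$ while the $\gamma_n(G)$ form a decreasing chain, for each pair of distinct elements of $S$ there is an index beyond which their images in $G/\gamma_n(G)$ differ; taking $n$ larger than the finitely many such indices makes $\pi_n$ injective on $S$. For such $n$ the elements $\pi_n(g)$, $g\in S$, are distinct, hence $\mathbb Z$-linearly independent in $\mathbb Z[G/\gamma_n(G)]$, so the relation $\pi_n(u)=1$ forces $S$ to be a singleton $\{g_0\}$ with $a_{g_0}=1$ and $\pi_n(g_0)=1$, i.e. $g_0\in\gamma_n(G)$. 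In particular $u=g_0\in G$ is a trivial unit.

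Finally, knowing $u=g_0\in G$, the already established relations $\pi_m(u)=1$ read $g_0\in\gamma_m(G)$ for all $m$, so $g_0\in\bigcap_m\gamma_m(G)=\{1\}$ by residual nilpotence, giving $u=1$. I expect the middle step---deducing, via injectivity of $\pi_n$ on the finite support, that $u$ must collapse to a single group element---to be the only genuinely substantive point; the transport of $u$ to the quotients and the final descent through $\bigcap_m\gamma_m(G)$ are formal consequences of the definitions and of residual nilpotence.
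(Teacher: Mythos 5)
Your proposal is correct and follows essentially the same route as the paper's proof: project $u$ to each nilpotent quotient where the hypothesis forces $\pi_n(u)=1$, choose $n$ large enough that the finitely many support elements stay distinct modulo $\gamma_n(G)$ to conclude $u$ is a single group element lying in $\gamma_n(G)$, and then invoke residual nilpotence. No gaps; the argument matches the paper step for step.
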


\begin{proof} Let $\pi_n:G\to G/\gamma_n(G)$, $n\geq 1$,  be the natural projection.  Extend $\pi_n$ to the group ring $\mathbb ZG$ by linearity and let the extended map still be denoted by $\pi_n$. Let $u=\sum n_ig_i$ be an element of 
$\mathcal V_\omega(\mathbb ZG)$ with $n_i\in \mathbb Z$ and all $g_i\in G$ distinct. By hypothesis, $\pi_n(u)=\overline{1}$ for all $n\geq 1$.
Choose $m\geq 1$ such that $g_i^{-1}g_j\not\in \gamma_m(G)$ for $i\not= j$. Since $\pi_m(u)=\bar{1}$, it follows that  $u=g\in \gamma_m(G)$. From $\overline{1}=\pi_n(u)=\overline{g}$ for all $n\geq 1$ it follows that $g\in \gamma_n(G)$ for all $n\geq 1$. Since $G$ is residually nilpotent, $g=1.$ Hence $\mathcal V_\omega(\mathbb ZG)=\{1\}$.
\end{proof}

We next prove that the class of groups $G$ with $\mathcal{V}_\omega(\mathbb{Z}G)=\{1\}$ is closed with respect to  discrimination. 
\\

Let $\mathcal C$ denote the class of groups  $G$ such that $\mathcal{V}(\mathbb{Z}G)$ has trivial $\Delta$-adic residue, and let Disc $\mathcal C$ denote  the class of groups discriminated by the class $\mathcal{C}$.

\begin{theorem}\label{disc}
	\quad $\mathcal C=\operatorname{Disc}\,\mathcal C.$
\end{theorem}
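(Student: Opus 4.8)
The statement $\mathcal C = \operatorname{Disc}\,\mathcal C$ asserts two inclusions. The inclusion $\mathcal C \subseteq \operatorname{Disc}\,\mathcal C$ is immediate: any group $G$ is discriminated by the singleton class $\{G\}$ (take $\varphi = \mathrm{id}$), so if $G \in \mathcal C$ then $G$ is discriminated by $\mathcal C$, whence $G \in \operatorname{Disc}\,\mathcal C$. The entire content of the theorem therefore lies in the reverse inclusion $\operatorname{Disc}\,\mathcal C \subseteq \mathcal C$: if a group $G$ is discriminated by groups all having trivial $\Delta$-adic residue of their unit groups, then $G$ itself has $\mathcal V_\omega(\mathbb ZG) = \{1\}$. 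This is the only inclusion I would actually prove.

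\para
The plan is to argue by contradiction, mimicking the style of the proof of Theorem \ref{nilpotentquotient}. Suppose $G \in \operatorname{Disc}\,\mathcal C$ but $\mathcal V_\omega(\mathbb ZG) \neq \{1\}$, and pick a non-trivial $u = \sum_{i=1}^{r} n_i g_i \in \mathcal V_\omega(\mathbb ZG)$, written with the $g_i \in G$ distinct and all $n_i \neq 0$. The key idea is that $u$ involves only finitely many group elements, so discrimination should let me find a single homomorphism $\varphi : G \to H$ with $H \in \mathcal C$ that ``sees'' all of $u$ faithfully. First I would apply the definition of discrimination to the finite set of distinct elements $\{g_i\} \cup \{g_i g_j^{-1}\}$ (or more carefully, to a finite set rich enough to detect that $u \neq \overline 1$): this produces $H \in \mathcal C$ and $\varphi : G \to H$ that is injective on the relevant finite set. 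Extending $\varphi$ linearly to $\mathbb ZG \to \mathbb ZH$, I would then check that $\varphi(u) \neq \overline 1$ in $\mathbb ZH$, using injectivity of $\varphi$ on the supporting elements so that distinct $g_i$ map to distinct elements of $H$ and no cancellation collapses $\varphi(u)$ to a trivial unit.

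\para
The remaining step is to verify that $\varphi(u) \in \mathcal V_\omega(\mathbb ZH)$, which then contradicts $H \in \mathcal C$. Since $\varphi$ is a ring homomorphism carrying $\Delta(G)$ into $\Delta(H)$, it carries $\Delta^n(G)$ into $\Delta^n(H)$ for every $n$, and preserves augmentation, so $\varphi$ maps $\mathcal V_n(\mathbb ZG)$ into $\mathcal V_n(\mathbb ZH)$ and hence $\mathcal V_\omega(\mathbb ZG)$ into $\mathcal V_\omega(\mathbb ZH)$. As $u \in \mathcal V_\omega(\mathbb ZG)$ we get $\varphi(u) \in \mathcal V_\omega(\mathbb ZH) = \{1\}$, forcing $\varphi(u) = \overline 1$ and contradicting the previous paragraph. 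This completes the argument.

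\para
The main obstacle I anticipate is the bookkeeping in the second paragraph: guaranteeing that a \emph{single} discriminating homomorphism $\varphi$ simultaneously keeps $\varphi(u) \neq \overline 1$. The subtlety is that discrimination only promises injectivity on a prescribed finite set of distinct \emph{group} elements, whereas non-triviality of $\varphi(u)$ is a statement about a $\mathbb Z$-linear combination; I must choose the finite set so that $\varphi$ being injective on it rules out the coefficients of $\varphi(u)$ recombining into those of the identity. Concretely, I would ensure the set includes every $g_i$ and enough products/quotients so that the images $\varphi(g_i)$ remain pairwise distinct, whence $\varphi(u) = \sum n_i \varphi(g_i)$ has the same (non-identity) support pattern as $u$ and therefore cannot equal $\overline 1$. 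Handling this cleanly is where the care is required; the functoriality step and the first inclusion are routine.
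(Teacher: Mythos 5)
Your argument is correct and follows essentially the same route as the paper: discriminate the (finite) support of a putative $u\in\mathcal V_\omega(\mathbb ZG)$, use that a homomorphism $\varphi:G\to H$ extends to a ring map carrying $\Delta^n(G)$ into $\Delta^n(H)$ and hence $\mathcal V_\omega(\mathbb ZG)$ into $\mathcal V_\omega(\mathbb ZH)=\{1\}$, and conclude $u=1$. The only (cosmetic) difference is that the paper applies discrimination twice --- first to the distinct support elements $g_1,\dots,g_n$, deducing that $u$ collapses to a single group element $g\in\ker\varphi$, and then to the pair $\{1,g\}$ --- whereas you fold both steps into one by including the identity (via the quotients $g_ig_j^{-1}$) in the discriminated set; the ``bookkeeping'' you worry about is exactly resolved by that inclusion, since $\mathbb Z$-linear independence of distinct group elements then forces $u$ to be a single $g_i$ mapping to $1$, hence equal to $1$.
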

\begin{proof} Clearly, by definition, $\mathcal C\subseteq\operatorname{Disc}\,\mathcal C$. 

Let $G\in \operatorname{Disc}\,\mathcal C$, and let $u\in \mathcal V_\omega(\mathbb ZG)$,   $u=\alpha_1g_1+\ldots+\alpha_ng_n$, $g_i\neq g_j$ for $i\neq j$. Let $\varphi:G\to H$ be a homomorphism with $H\in \mathcal C$ such that $\varphi(g_1),\ldots,\varphi(g_n)$ are distinct elements of $H$. Extend $\varphi$ to $\mathbb ZG$ by linearity. Then $\varphi(u)\in \mathcal V_\omega(\mathbb ZH)=\{1\}$, and it thus follows that $u=g\in \ker\varphi$. In case $g\not=1$, we have a homomorphism $\psi:G\to K$ with $K\in \mathcal C$ such that $\psi(g)\not=1=\psi(1).$ Since $g-1\in \Delta^\omega(G)$, extension of $\psi$ to $\mathbb ZG $ shows that $\psi(g-1)=0$, a contradiction. Hence, it follows that $g=1$ and so  $G\in \mathcal C$. 
\end{proof}
\section{Groups with trivial $\mathcal D$-residue}
We next study  groups $G$ whose $\mathcal D$-residue, namely,  $$D_\omega(G):= \cap _{n=1}^\infty D_n(G)$$ is trivial. 
Since $$\gamma_{n}(G)\subseteq D_n(G)\subseteq \mathcal{V}_n(\mathbb{Z}G),$$ the triviality of the $\Delta$-adic residue of a group  always implies that of its $\mathcal D$-residue.\\

 Over the field $\mathbb Q$ of rationals,  the following statements for an arbitrary group $G$ are equivalent (\cite{Pas79}, Theorem 2.26, p.\,90):
 
 \begin{description}
 	\item[(i)] $G$ is residually torsion-free nilpotent;
 	\item[(ii)] $\Delta^\omega_\mathbb Q(G)=\{0\};$
 	\item[(iii)] $D_{\omega,\,\mathbb Q}(G)=\{1\}$. 
 \end{description}
Here $\Delta_\mathbb Q( G) $ denotes the augmentation ideal of the group algebra $\mathbb QG$, $$D_{\omega,\,\mathbb Q}(G)=\cap_{n=1}^\infty D_{n,\,\mathbb Q}(G),\ D_{n,\,\mathbb Q}(G)=G\cap(1+\Delta_{\mathbb Q}^n(G)).$$

\begin{theorem}\label{dimension}
	Let $G$ be a  group such that \begin{description}
		\item[(i)] $D_{\omega,\,\mathbb{Q}}(G)\neq \{1\}$; and
		\item[(ii)] $D_{\omega}(G)=\{1\}$.
	\end{description}
	Then, for any finitely many distinct elements $g_1,\,g_2,\,...\,,\,g_n\in G$, and the subgroup $W$ generated by the left-normed commutators $[w_1,\,w_2,\,\ldots\,w_r], \ r\geq 1,$ with $$w_i\in \{g_j^{-1}g_k,\  1\leq j,\,k\leq n,\ j\neq k\}, \ 1\leq i\leq r,$$ and such that for every $1\leq j,\,k\leq n,\ j\neq k,$ either $g_j^{-1}g_k$ or $g_k^{-1}g_j$ equals some  $w_i$,  one of the following holds:
	\begin{description}
		\item[(a)]
		The subgroup $W$ 
		is contained in $ C_G(D_{\omega,\,\mathbb{Q}}(G))$, the centralizer of $D_{\omega,\, \mathbb{Q}}(G)$ in $G$;  
		\item[(b)] $g_1,\,g_2,\,...\,,\,g_n$ are discriminated by the class $$\mathcal{K}:=\cup_{p\ prime}\,\mathcal{K}_{p},$$ where $\mathcal{K}_{p}$ denotes the class of nilpotent $p$-groups of bounded exponent.		\end{description}
	
\end{theorem}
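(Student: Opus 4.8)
The plan is to use the integral hypothesis $D_\omega(G)=\{1\}$ to pass to nilpotent quotients, to translate condition (b) into an arithmetic statement about which differences $g_j^{-1}g_k$ can be kept alive in a single nilpotent $p$-group of bounded exponent, and then to read off the dichotomy from the behaviour of those differences relative to $N:=D_{\omega,\mathbb Q}(G)$. The first observation is that $D_\omega(G)=\{1\}$ forces residual nilpotence: since $\gamma_n(G)\subseteq D_n(G)$ for all $n$, we get $\cap_n\gamma_n(G)\subseteq D_\omega(G)=\{1\}$. Hence, for the given distinct $g_1,\dots,g_n$ there is an $m$ with $g_j^{-1}g_k\notin\gamma_m(G)$ whenever $j\neq k$, so the images $\bar g_j$ are already distinct in the nilpotent quotient $Q_m:=G/\gamma_m(G)$, indeed in the finitely generated nilpotent subgroup $\bar U:=\langle\bar g_1,\dots,\bar g_n\rangle$. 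Since every homomorphism of $G$ onto a finite nilpotent $p$-group factors through some $Q_m$ and then through its pro-$p$ completion, condition (b) becomes the assertion that there exist a prime $p$ and an $m$ such that every difference $\bar s:=\overline{g_j^{-1}g_k}$ has non-trivial image in the pro-$p$ completion of $Q_m$, i.e. has infinite order or order divisible by $p$.

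Next I would classify the differences using the classical identification of the rational dimension subgroup $D_{n,\mathbb Q}(G)$ with the isolator $\sqrt{\gamma_n(G)}$, so that $N=\cap_n\sqrt{\gamma_n(G)}$. A difference $s\notin N$ lies outside $\sqrt{\gamma_m(G)}$ for all large $m$, hence $\bar s$ has infinite order in $Q_m$ and survives in finite $p$-quotients for \emph{every} prime $p$; such differences never obstruct (b). Therefore the only possible obstruction comes from the differences lying in $N$, which are torsion modulo every $\gamma_m(G)$: for these, (b) succeeds exactly when a single prime $p$ divides the order modulo $\gamma_m(G)$ of each such difference for some common $m$.

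The heart of the argument, where case (a) must be produced, is to assume that (b) fails, so that no single prime serves all the $N$-differences simultaneously, and to deduce $W\subseteq C_G(N)$. The mechanism I would pursue works in the associated graded Lie ring $\operatorname{gr}(G)=\oplus_n\gamma_n(G)/\gamma_{n+1}(G)$: for $z\in N$ one has $z^{k}\in\gamma_r(G)$ for suitable $k,r$, and the commutator collection formula gives $[w,z^{k}]\equiv[w,z]^{k}\pmod{\gamma_{r+1}(G)}$ while $[w,z^{k}]\in\gamma_{r+1}(G)$, so $[w,z]$ is again torsion modulo the lower central series and the primes controlling its order are inherited from those of $z$. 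A non-trivial bracket $[w,z]\neq 1$ (which, since $D_\omega(G)=\{1\}$, is detected by some $\gamma_r(G)$) would then let one transport the prime occurring for $z$ across to the remaining $N$-differences and manufacture the \emph{common prime} demanded by (b); contrapositively, the impossibility of a common prime forces $[w,z]=1$ for every generating commutator $w$ of $W$ and every $z\in N$, that is, $W\subseteq C_G(N)$.

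I expect this third step to be the real obstacle, and two points will need care. First, $N$ is in general external to the finitely generated subgroup $\bar U$, so the bracket computation must be carried out in $G$ itself and only afterwards pushed into the finite nilpotent quotients. Second, for a fixed $z\in N$ the order of $z$ modulo $\gamma_m(G)$ may grow with $m$, so the common prime has to be extracted uniformly in $m$ rather than from a single quotient. Making precise how the non-vanishing of $[W,N]$ converts prime-incompatibility into a contradiction—equivalently, establishing the exact trichotomy between infinite order, a uniform prime, and centralisation—is the crux, and I would expect Jennings' description of $\sqrt{\gamma_n(G)}/\gamma_n(G)$ as the torsion subgroup of $G/\gamma_n(G)$, together with Theorem~\ref{Delta0}, to supply the necessary control.
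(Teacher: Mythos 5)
Your proposal does not reach a proof: the step you yourself flag as ``the crux'' --- converting the failure of (b) into $W\subseteq C_G(D_{\omega,\mathbb Q}(G))$ via the graded Lie ring and a ``transport of primes'' --- is exactly the content of the theorem, and the mechanism you sketch for it is not made to work. The paper's actual engine is a short group-ring computation (its Lemma~\ref{DiscriminatedOrCentralize}) that your outline is missing: if a single element $u\neq 1$ is \emph{not} discriminated from $1$ by $\mathcal K$, then $u\in\gamma_l(G)G^{p^k}$ for all $l,k,p$, whence $u-1\in\Delta^n(G)+p^{\ell}\Delta(G)$ for every prime $p$ and every $\ell$; combining this with $m_n(g-1)\in\Delta^n(G)$ for $g\in D_{n,\mathbb Q}(G)$ (choosing $\ell$ against the prime factorization of $m_n$) gives $(u-1)(g-1),\,(g-1)(u-1)\in\Delta^n(G)$, hence $[u,g]\in D_\omega(G)=\{1\}$, i.e.\ $u\in C_G(D_{\omega,\mathbb Q}(G))$. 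No passage through finite or bounded-exponent quotients, no Jennings isolator description, and no ``common prime'' extraction is needed; the dichotomy for a single element is immediate, and the general case follows because a generating left-normed commutator $u=[w_1,\dots,w_r]$ of $W$ that survives some $\varphi:G\to H\in\mathcal K$ forces every $\varphi(w_i)\neq 1$ and hence separates all the $g_j$ under the \emph{same} homomorphism --- a use of the left-normed structure of $W$ that your outline never engages.

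Two further concrete problems with your reductions. First, discrimination in (b) requires homomorphisms defined on $G$ into $\mathcal K$, not on the finitely generated subgroup $\bar U$ of $G/\gamma_m(G)$; since $G/\gamma_m(G)$ need not be finitely generated, an element of infinite order there can still lie in $\cap_k \gamma_m(G)G^{p^k}$ for every $p$ (divisible elements), so your claim that differences outside $N$ ``never obstruct (b)'' is unjustified as stated. Second, your arithmetic reformulation of (b) as ``one prime divides the order of every $N$-difference in a common quotient'' quantifies over the wrong objects for the same reason, and the subsequent Lie-ring computation $[w,z^k]\equiv[w,z]^k$ only controls torsion of $[w,z]$, not the vanishing $[w,z]=1$ that case (a) actually requires. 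The correct bridge from ``not discriminated'' to ``centralizes'' is the augmentation-ideal estimate above, and without it the argument does not close.
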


In order to prove the above result, we need the following:

\begin{lemma}\label{DiscriminatedOrCentralize}
	Let $G$ be a group which is not residually ``torsion-free nilpotent" and let $1\not= g\in D_{\omega,\, \mathbb{Q}}(G)$. If $1\not=g_1\in G$ is such that $1$ and  $g_1$ are not discriminated by the class $\mathcal{K}$, then $[g_1,\,g]\in D_\omega(G).$
\end{lemma}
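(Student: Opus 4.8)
The plan is to pass through the two products that make up the commutator. Using the elementary identity
$$[g_1,g]-1=g_1^{-1}g^{-1}\bigl((g_1-1)(g-1)-(g-1)(g_1-1)\bigr),$$
and the fact that each $\Delta^n(G)$ is a two-sided ideal (so multiplication by the unit $g_1^{-1}g^{-1}$ preserves membership in $\Delta^n(G)$), it suffices to show that the \emph{difference}
$$D:=(g_1-1)(g-1)-(g-1)(g_1-1)$$
lies in $\Delta^\omega(G)=\cap_{n\ge 1}\Delta^n(G)$; this is equivalent to $[g_1,g]\in D_\omega(G)$. So I would fix $n\ge 1$ and aim to prove $D\in\Delta^n(G)$.

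Next I would unwind the two hypotheses. Since $G$ is not residually torsion-free nilpotent, $1\neq g\in D_{\omega,\mathbb{Q}}(G)$ means, by the equivalences recalled before Theorem \ref{dimension}, that $g-1\in\Delta_{\mathbb Q}^{\omega}(G)$; hence for the chosen $n$ there is an integer $m\ge 1$ with $m(g-1)\in\Delta^n(G)$. On the other side, the hypothesis that $1$ and $g_1$ are not discriminated by $\mathcal K=\cup_p\mathcal K_p$ says that every homomorphism of $G$ onto a nilpotent $p$-group of bounded exponent kills $g_1$; equivalently $g_1$ has infinite $p$-height in $G$ for \emph{every} prime $p$, i.e.\ for all $i,j$ there are $x\in G$ and $y\in\gamma_j(G)$ with $x^{p^i}=g_1y$.

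The key local estimate I would establish is: for every prime $p$ and every $n\ge 1$ one has $(\star)$: $g_1-1\in\Delta^n(G)+p\,\mathbb{Z}G$. Indeed, pick $s$ with $p^s\ge n$ and, using infinite $p$-height with $j=n$, write $g_1=x^{p^s}y$ with $y\in\gamma_n(G)$. Then $g_1-1=x^{p^s}(y-1)+(x^{p^s}-1)$, where $y-1\in\Delta^n(G)$ because $\gamma_n(G)\subseteq D_n(G)$, while the congruence $x^{p^s}-1\equiv(x-1)^{p^s}\pmod{p\,\mathbb{Z}G}$ (iterating $a^p-1-(a-1)^p=\sum_{k=1}^{p-1}\binom{p}{k}(a-1)^k\in p\,\mathbb{Z}G$) puts $x^{p^s}-1$ into $\Delta^{p^s}(G)+p\,\mathbb{Z}G\subseteq\Delta^n(G)+p\,\mathbb{Z}G$, proving $(\star)$. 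Writing $g_1-1=\delta_p+p\rho_p$ with $\delta_p\in\Delta^n(G)$ as in $(\star)$, substitute into $D$: the $\delta_p$-contribution lands in $\Delta^{n+1}(G)\subseteq\Delta^n(G)$, so modulo $\Delta^n(G)$ the element $D$ is divisible by $p$. Thus, in the module $M:=\mathbb{Z}G/\Delta^n(G)$, the image $\overline D$ satisfies $\overline D\in pM$ for every prime $p$; and from $m(g-1)\in\Delta^n(G)$ one gets $m\overline D=0$.

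The hard part is exactly the final deduction $\overline D=0$, i.e.\ $D\in\Delta^n(G)$. Over $\mathbb{Z}$ the quotient $\mathbb{Z}G/\Delta^n(G)$ can contain divisible torsion — already for $G=\mathbb{Z}(p^\infty)$ one has $\Delta^\omega(G)\neq\{0\}$ while $D_\omega(G)=\{1\}$ — so one cannot conclude na\"ively from ``$\overline D$ is $m$-torsion and infinitely divisible'' that $\overline D$ vanishes. I expect this to be the genuine obstacle, to be resolved by the structural analysis of $\Delta^\omega$ underlying Theorem \ref{Delta0}, in particular Passi's result on products $(u-1)(v-1)$ whose factors have infinite $p$-height (\cite{Pas79}, Theorem 2.3, p.\,97), applied prime-by-prime to the primes dividing $m$. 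It is precisely here that the infinite $p$-height of $g_1$ (available for \emph{all} $p$ simultaneously) must be played off against the rational depth of $g$ to upgrade the mere divisibility of $\overline D$ into honest membership of $D$ in $\Delta^n(G)$, and hence, $n$ being arbitrary, in $\Delta^\omega(G)$.
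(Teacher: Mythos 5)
Your setup coincides with the paper's: the same commutator identity, the same reduction to showing that $(g_1-1)(g-1)$ and $(g-1)(g_1-1)$ lie in $\Delta^n(G)$ for each fixed $n$, and the same two inputs, namely $m(g-1)\in\Delta^n(G)$ for some $m\ge 1$ and the consequence of non-discrimination that $g_1\in\bigcap_{l,k}\gamma_l(G)G^{p^k}$ for every prime $p$. The gap you flag at the end is real, but it is one you create yourself by weakening both inputs before combining them. First, you only establish $g_1-1\in\Delta^n(G)+p\,\mathbb{Z}G$; the hypothesis actually yields $g_1-1\in\Delta^n(G)+p^{\ell}\Delta(G)$ for \emph{every} $\ell\ge 1$ (take $k$ large in $\gamma_n(G)G^{p^k}$ and use the refined congruence $x^{p^k}-1\in\sum_{i=0}^{k}p^{k-i}\Delta^{p^i}(G)$, which tracks higher powers of $p$, rather than the mod-$p$ congruence you iterate). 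Second, and more damagingly, you pass to the bare abelian group $M=\mathbb{Z}G/\Delta^n(G)$ and retain only ``$\overline D$ is divisible by each prime and $m$-torsion,'' discarding the fact that the cofactor of the prime power still carries a factor $(g-1)$. With the two facts kept at full strength no new machinery is needed: writing $m=p_1^{\alpha_1}\cdots p_r^{\alpha_r}$ and $g_1-1=\delta_i+p_i^{\alpha_i}\rho_i$ with $\delta_i\in\Delta^n(G)$, $\rho_i\in\Delta(G)$, one gets $D\equiv p_i^{\alpha_i}\bigl(\rho_i(g-1)-(g-1)\rho_i\bigr)\bmod \Delta^n(G)$, hence $(m/p_i^{\alpha_i})D\in\Delta^n(G)$ because $m(g-1)\in\Delta^n(G)$ and $\Delta^n(G)$ is a two-sided ideal; since $\gcd_i(m/p_i^{\alpha_i})=1$, a Bezout combination gives $D\in\Delta^n(G)$. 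This is exactly the content of the paper's ``it then easily follows.''

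Your proposed rescue --- Passi's theorem on products $(u-1)(v-1)$ of elements of infinite $p$-height --- is not the right tool here: the element $g$ carries no height hypothesis whatsoever, only the rational-depth statement $m_n(g-1)\in\Delta^n(G)$, so that theorem does not apply to the pair $(g_1,g)$. A secondary inaccuracy: non-discrimination by $\mathcal{K}$ gives $g_1\in\gamma_n(G)G^{p^s}$ with $G^{p^s}$ the \emph{subgroup generated by} $p^s$-th powers, not a single factorization $g_1=x^{p^s}y$; your derivation of $(\star)$ should therefore expand a product of such powers (harmless for the ideal computation, but the identification with ``infinite $p$-height'' as defined in the paper is not literal).
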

\begin{proof} Let $G$, $g_1$ and $g$ be as in the statement of the Lemma. Since $$[g_1,\,g]-1=g_1^{-1}g_2^{-1}[(g_1-1)(g-1)-(g-1)(g_1-1)], $$ it suffices to prove that, for all $n\geq 1$, $(g_1-1)(g-1)$ and $(g-1)(g_1-1) $ belong to $ \Delta^n(G)$.
		
	Let $ n\geq 1$ be fixed. Note  that $1\not= g\in D_{n,\,\mathbb{Q}}(G)$ implies that   $$m_n(g-1)\in \Delta^n(G),$$ for some $m_n\in \mathbb{N}$. Let $m_n=p_1^{\alpha_1}p_2^{\alpha_2}...p_r^{\alpha_r}$ be the prime factorization of $m_n$. 	
	Since $1$ and $g_1$ are  not discriminated by the class $\mathcal{K}$, for any group $H\in \mathcal{K}$, and for any homomorphism $\phi:G\rightarrow H$, $\phi(g_1)=1_H$. In particular, for all natural numbers $l$, $k$ and  primes $p$, since $G/\gamma_{l}(G)G^{p^k}\in {\mathcal{K}_p}$, we have that $$g_1\in \cap_{l,k}\gamma_{l}(G)G^{p^k}.$$ Consequently, $g_1-1\in \Delta^n(G)+p^\ell\Delta(G)$ for all primes $p$ and natural numbers $\ell \geq 1$. It then easily follows that both $(g-1)(g_1-1)$ and $(g_1-1)(g-1)$ lie in $\Delta^n(G)$.
\end{proof}

\par\noindent
\textit{Proof of Theorem} \ref{dimension}.
	We first consider the case when the given elements of $G$ are $1$ and $g_1\not=1$, so that $W=\langle g_1\rangle$. 
	
	If $g_1\not\in C_G(D_{\omega,\,\mathbb{Q}}(G))$, then there exists $ 1\not= g\in D_{\omega,\,\mathbb{Q}}(G)$ such that $[g_1,\,g]\neq 1$. But then,  if 1 and $g_1$ are  not discriminated by the class $\mathcal{K}$,  by Lemma \ref{DiscriminatedOrCentralize}, $1\not=[g_1,\,g]\in D_{\omega}(G)$, which contradicts the assumption (ii). Hence,\,1 and $g_1$ must be discriminated by the class $\mathcal{K}$. 
	
	Next, let $g_1,\,g_2,\,\ldots\,,\,g_n$, $n\ge 2$,   be distinct elements of $G$. In case the\linebreak  subgroup $W$ is not contained in $C_G(D_{\omega,\,\mathbb Q}(G))$, then there exists one of the\linebreak  generating  left-normed commutators of $W$, say $u=[w_1,\,\ldots\,,\,w_r]$,   which does not belong to $C_G(D_{\omega,\,\mathbb Q}(G))$. By the case considered above the  elements $1$ and $u$ are  discriminated by the class $\mathcal K$. Thus there exists a  homomorphism $\varphi:G\to H$ with $H\in \mathcal K$ such that $\varphi(u)\not=1$. Since $$\varphi(u)=[\varphi(w_1),\,\ldots\,,\,\varphi(w_r)],$$ it follows that $\varphi(w_i)\not=1$ for $i=1,\ldots,r$. Consequently, $\varphi(g_j^{-1}g_k)\not=1$ for $j\not=k$, since either $g_j^{-1}g_k$ or $g_k^{-1}g_j$ equals one of the $w_i's$. Hence the elements $g_1,\,\ldots\,,\,g_n$ are discriminated by the class $\mathcal K$. $\Box$
\par\vspace{.5cm}
Since, for every torsion group $G$, we have  $D_{\omega,\,\mathbb{Q}}(G)=G$, the above theorem yields the following:
\begin{cor}
	Let $G$ be a torsion group with trivial $\mathcal{D}$-residue, then, every non-central element is discriminated from the identity element  by the class  $\mathcal{K}$.
\end{cor}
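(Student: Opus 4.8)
The plan is to obtain the corollary as a direct specialization of Theorem~\ref{dimension} to the two-element configuration $\{1,\,g_1\}$, exploiting the observation recorded just above that a torsion group $G$ satisfies $D_{\omega,\,\mathbb{Q}}(G)=G$.

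First I would verify that the hypotheses of Theorem~\ref{dimension} are in force. Hypothesis (ii), namely $D_\omega(G)=\{1\}$, is precisely the assumption that $G$ has trivial $\mathcal D$-residue. For hypothesis (i), note that the presence of a non-central element forces $G$ to be non-abelian, hence non-trivial; combined with $D_{\omega,\,\mathbb{Q}}(G)=G$ this yields $D_{\omega,\,\mathbb{Q}}(G)\neq\{1\}$. (If $G$ happens to be abelian there are no non-central elements and the assertion is vacuous, so we may assume $G$ is non-abelian.)

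Next I would fix a non-central element $g_1$ and apply Theorem~\ref{dimension} to the two distinct elements $1$ and $g_1$. Here the only available differences $g_j^{-1}g_k$ are $g_1$ and $g_1^{-1}$, which commute, so all left-normed commutators of length $\geq 2$ are trivial and the subgroup $W$ of the theorem reduces to $\langle g_1\rangle$, exactly as in the base case treated inside the proof of the theorem. The decisive simplification is that, since $D_{\omega,\,\mathbb{Q}}(G)=G$, the centralizer $C_G(D_{\omega,\,\mathbb{Q}}(G))$ occurring in alternative (a) is nothing but the center $Z(G)$. Consequently alternative (a) reads $\langle g_1\rangle\subseteq Z(G)$, i.e.\ $g_1$ is central, which is exactly what we have excluded. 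Therefore alternative (b) must hold, and $1$ and $g_1$ are discriminated by the class $\mathcal K$, as required.

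I do not anticipate a genuine obstacle: all the substantive work, namely Lemma~\ref{DiscriminatedOrCentralize} and the commutator-separation argument, is already carried out in the theorem. The only point that deserves a moment's care is the collapse of the dichotomy for torsion groups, that is, confirming that condition (a) coincides with the centrality of $g_1$ precisely because $D_{\omega,\,\mathbb{Q}}(G)$ fills out all of $G$; its failure is then guaranteed for every non-central element.
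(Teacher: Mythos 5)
Your proposal is correct and is exactly the derivation the paper intends: since $D_{\omega,\,\mathbb{Q}}(G)=G$ for a torsion group, alternative (a) of Theorem~\ref{dimension} applied to the pair $\{1,\,g_1\}$ collapses to $g_1\in Z(G)$, so for a non-central $g_1$ alternative (b) must hold. No further comment is needed.
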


For any group $G$,  an analogous  analysis done on $\mathcal V(\mathbb ZG)$   yields the following results of which we omit the proofs. 
\begin{theorem}
	Let $G$ be a group such that  \begin{description}
		\item[(i)] $\mathcal{V}_{\omega,\,\mathbb{Q}}(\mathbb{Z}G)\neq\{1\}$, where $\mathcal{V}_{\omega,\,\mathbb{Q}}(\mathbb{Z}G):=\mathcal{V}(\mathbb{Z}G)\cap(1+\Delta^\omega_\mathbb{Q}(G))$; and
		\item[(ii)] $\mathcal{V}_\omega(\mathbb{Z}G)=\{1\}$
	\end{description}
Then, for finitely many distinct elements $v_1,\,v_2,\,...\,,\, v_n\in \mathcal{V}(\mathbb{Z}G)$, and the subgroup $W$ generated by the left-normed commutators $[w_1,\,\ldots\,,\,w_r]$, $r\geq 1$, with $$w_i\in \{v_{j}^{-1}v_k\,|\,1\leq j,\,k\leq n,\ j\not=k\},\ 1\leq i\leq r,$$ and such that for every $1\leq j,\,k\leq n,\ j\not=k$, either $v_{j}^{-1}v_k$ or $v_k^{-1}v_j$ equals some $w_i$, one of the following holds:
\begin{description}
	\item[(i)] the subgroup $W$ is contained in $ C_{\mathcal{V}}(\mathcal{V}_{\omega,\mathbb{Q}}(\mathbb{Z}G))$, the centralizer of $\mathcal{V}_{\omega,\,\mathbb{Q}}(\mathbb{Z}G)$ in $\mathcal{V}:=\mathcal{V}(\mathbb{Z}G)$; or
	\item[(ii)] there exists a prime $p$ and integers $k,\,\ell$ such that none of the elements $v_1,\,v_2,\,...\,,\, v_n$ belong to $\mathcal{V}_{p,\,k,\,\ell}(\mathbb{Z}G):=\{v\in \mathcal{V}(\mathbb{Z}G):v-1 \in \Delta^k(G)+p^\ell\mathbb{Z}G\}.$
\end{description}
\end{theorem}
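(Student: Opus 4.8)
The plan is to transcribe, \emph{mutatis mutandis}, the proof of Theorem \ref{dimension}, replacing the group $G$ by the unit group $\mathcal{V}:=\mathcal{V}(\mathbb{Z}G)$, the rational and integral dimension residues $D_{\omega,\mathbb{Q}}(G)$, $D_{\omega}(G)$ by $\mathcal{V}_{\omega,\mathbb{Q}}(\mathbb{Z}G)$, $\mathcal{V}_{\omega}(\mathbb{Z}G)$, and the congruence subgroups $\gamma_{l}(G)G^{p^{k}}$ by the normal subgroups $\mathcal{V}_{p,k,\ell}(\mathbb{Z}G)$. The driving force of the argument will be a unit-theoretic analogue of Lemma \ref{DiscriminatedOrCentralize}, which I would isolate and prove first: \emph{assuming} (i) and (ii), if $u\in\mathcal{V}_{\omega,\mathbb{Q}}(\mathbb{Z}G)$ and $v\in\mathcal{V}(\mathbb{Z}G)$ satisfies $v\in\mathcal{V}_{p,k,\ell}(\mathbb{Z}G)$ for \emph{every} prime $p$ and all $k,\ell\geq 1$, then $[v,u]\in\mathcal{V}_{\omega}(\mathbb{Z}G)=\{1\}$, so that $v\in C_{\mathcal V}(\mathcal{V}_{\omega,\mathbb{Q}}(\mathbb{Z}G))$.

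To prove this lemma I would fix $n\geq 1$ and use the identity $[v,u]-1=v^{-1}u^{-1}[(v-1)(u-1)-(u-1)(v-1)]$, which reduces matters to showing that $(v-1)(u-1)$ and $(u-1)(v-1)$ lie in $\Delta^{n}(G)$ for every $n$. Since $u-1\in\Delta^{\omega}_{\mathbb{Q}}(G)$, there is an integer $m_{n}$ with $m_{n}(u-1)\in\Delta^{n}(G)$; writing $m_{n}=p_{1}^{\alpha_{1}}\cdots p_{r}^{\alpha_{r}}$ and choosing integers $c_{i}$ with $\sum_{i}c_{i}(m_{n}/p_{i}^{\alpha_{i}})=1$, I would decompose $v-1=a_{i}+p_{i}^{\alpha_{i}}b_{i}$ with $a_{i}\in\Delta^{n}(G)$ (taking $k=n$ in the hypothesis $v\in\mathcal{V}_{p_i,n,\alpha_i}$). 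Then each summand $c_{i}(m_{n}/p_{i}^{\alpha_{i}})(u-1)(v-1)$ splits into a term $c_{i}(m_{n}/p_{i}^{\alpha_{i}})(u-1)a_{i}\in\Delta^{n+1}(G)$ and a term $c_{i}m_{n}(u-1)b_{i}\in\Delta^{n}(G)$ (as $\Delta^{n}(G)$ is an ideal and $m_{n}(u-1)\in\Delta^{n}(G)$); summing and using $\sum_{i}c_{i}(m_{n}/p_{i}^{\alpha_{i}})=1$ gives $(u-1)(v-1)\in\Delta^{n}(G)$, and symmetrically for $(v-1)(u-1)$. This coprimality (CRT) splitting, already used in Lemma \ref{DiscriminatedOrCentralize}, is the one genuinely computational step, and here one must note that on normalized units $v\in\mathcal{V}_{p,k,\ell}(\mathbb{Z}G)$ is equivalent to $v-1\in\Delta^{k}(G)+p^{\ell}\Delta(G)$.

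For the theorem itself I would argue by contraposition, exactly as in Theorem \ref{dimension}. Suppose $W\not\subseteq C_{\mathcal V}(\mathcal{V}_{\omega,\mathbb{Q}}(\mathbb{Z}G))$; then some generating left-normed commutator $u^{*}=[w_{1},\ldots,w_{r}]$ fails to centralize $\mathcal{V}_{\omega,\mathbb{Q}}(\mathbb{Z}G)$, and the contrapositive of the lemma produces a prime $p$ and integers $k,\ell$ with $u^{*}\notin\mathcal{V}_{p,k,\ell}(\mathbb{Z}G)$. Since $I:=\Delta^{k}(G)+p^{\ell}\mathbb{Z}G$ is a two-sided ideal, reduction modulo $I$ is a ring homomorphism $\mathbb{Z}G\to R:=\mathbb{Z}G/I$ carrying units to units and sending $u^{*}$ to the commutator $[\bar{w}_{1},\ldots,\bar{w}_{r}]$ of the images, with $\bar{u}^{*}\neq 1$. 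Because a left-normed group commutator is trivial as soon as one entry is trivial, $\bar{u}^{*}\neq 1$ forces $\bar{w}_{i}\neq 1$, i.e. $w_{i}\notin\mathcal{V}_{p,k,\ell}(\mathbb{Z}G)$, for every $i$. As every $v_{j}^{-1}v_{k}$ (for $j\neq k$) equals some $w_{i}$ up to inversion, it follows that $v_{j}^{-1}v_{k}\notin\mathcal{V}_{p,k,\ell}(\mathbb{Z}G)$ for all $j\neq k$; equivalently the images $\bar{v}_{1},\ldots,\bar{v}_{n}$ are pairwise distinct in the unit group of $R$, which is alternative (ii). The base case $n=2$, where $W=\langle v_{1}^{-1}v_{2}\rangle$, is just the lemma combined with (ii), mirroring the opening paragraph of the proof of Theorem \ref{dimension}.

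I expect the delicate points to be twofold. First, in the lemma one must carefully justify that $u-1\in\Delta^{\omega}_{\mathbb{Q}}(G)$ yields integers $m_{n}$ with $m_{n}(u-1)\in\Delta^{n}(G)$ (via the isolator description of $\Delta^{n}_{\mathbb{Q}}(G)\cap\mathbb{Z}G$) and that the CRT splitting keeps every residual term inside $\Delta^{n}(G)$; this is where the replacement of $p^{\ell}\mathbb{Z}G$ by $p^{\ell}\Delta(G)$ on normalized units is what makes the bookkeeping clean. Second, the passage from ``$\bar{u}^{*}\neq 1$'' to the separation of all the $v_{i}$ rests only on the elementary collapse of left-normed commutators, but it must be matched precisely against the indexing hypothesis that every pair $v_{j}^{-1}v_{k}$, $v_{k}^{-1}v_{j}$ is represented among the $w_{i}$ — this is exactly the role played by the analogous hypothesis in Theorem \ref{dimension}, and it is the step I would write out most carefully.
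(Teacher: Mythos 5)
Your proposal is correct and is precisely the argument the paper intends: the paper omits this proof, stating only that it follows from ``an analogous analysis'' to Theorem \ref{dimension}, and your unit-theoretic analogue of Lemma \ref{DiscriminatedOrCentralize} (with the observation that $v-1\in\Delta^k(G)+p^\ell\mathbb{Z}G$ refines to $\Delta^k(G)+p^\ell\Delta(G)$ on normalized units, the coprimality splitting, and the collapse of left-normed commutators) is exactly that analysis with the details filled in. The one caveat is that what your argument --- like its group-theoretic model --- actually delivers in case (ii) is the separation statement $v_j^{-1}v_k\notin\mathcal{V}_{p,\,k,\,\ell}(\mathbb{Z}G)$ for $j\neq k$, which is the intended reading of (ii) (compare Theorem \ref{dimension}(b) and the corollary following this theorem) rather than its literal wording that each $v_i\notin\mathcal{V}_{p,\,k,\,\ell}(\mathbb{Z}G)$.
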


\begin{cor}
	If the group $G$ is such that $\mathcal{V}(\mathbb{Z}G)=\mathcal{V}_{\omega,\,\mathbb{Q}}(\mathbb{Z}G)$ and $\mathcal{V}_\omega(\mathbb{Z}G)=\{1\}$, then every non central unit $u\in \mathcal{V}(\mathbb{Z}G)$ does not belong to some $\mathcal{V}_{p,\,k,\,l}(\mathbb{Z}G)$.

\end{cor}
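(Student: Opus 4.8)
The plan is to read this Corollary off the preceding Theorem by specialising it to the two–element configuration $\{1,\,u\}$, exactly as the Corollary to Theorem~\ref{dimension} (``every non-central element is discriminated from the identity by $\mathcal{K}$'') is read off that theorem. The first step is to record what the hypothesis $\mathcal{V}(\mathbb{Z}G)=\mathcal{V}_{\omega,\,\mathbb{Q}}(\mathbb{Z}G)$ does to alternative~(i): since now every normalised unit lies in $\mathcal{V}_{\omega,\,\mathbb{Q}}(\mathbb{Z}G)$, the relevant centraliser collapses to the centre,
\[
C_{\mathcal{V}}\bigl(\mathcal{V}_{\omega,\,\mathbb{Q}}(\mathbb{Z}G)\bigr)=C_{\mathcal{V}}\bigl(\mathcal{V}(\mathbb{Z}G)\bigr)=Z\bigl(\mathcal{V}(\mathbb{Z}G)\bigr),
\]
so that, under the standing hypotheses, alternative~(i) of the Theorem simply asserts $W\subseteq Z(\mathcal{V}(\mathbb{Z}G))$.

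Before applying the Theorem I would check its two hypotheses. Hypothesis~(ii) is literally our assumption $\mathcal{V}_\omega(\mathbb{Z}G)=\{1\}$, and hypothesis~(i), that $\mathcal{V}_{\omega,\,\mathbb{Q}}(\mathbb{Z}G)\neq\{1\}$, holds whenever a non-central unit exists, since then $\mathcal{V}(\mathbb{Z}G)\neq\{1\}$ and $\mathcal{V}_{\omega,\,\mathbb{Q}}(\mathbb{Z}G)=\mathcal{V}(\mathbb{Z}G)$. Now fix a non-central unit $u$ and apply the Theorem to $n=2$, $v_1=1$, $v_2=u$; these are distinct because $u\neq 1$. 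The only pair difference is $v_1^{-1}v_2=u$ (its inverse being $v_2^{-1}v_1=u^{-1}$), so choosing $w_1=u$ meets the requirement on the $w_i$, and since the length-one commutators $u,\,u^{-1}$ already generate $\langle u\rangle$ while all longer left-normed commutators of these commuting elements are trivial, the associated subgroup is $W=\langle u\rangle$.

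It remains to run the dichotomy. Because $u$ is non-central, $u\notin Z(\mathcal{V}(\mathbb{Z}G))$, hence $W=\langle u\rangle\not\subseteq Z(\mathcal{V}(\mathbb{Z}G))=C_{\mathcal{V}}(\mathcal{V}_{\omega,\,\mathbb{Q}}(\mathbb{Z}G))$; thus alternative~(i) is excluded, and the Theorem forces alternative~(ii). This yields a prime $p$ and integers $k,\,\ell$ for which the difference $u=v_1^{-1}v_2$ is discriminated from the identity, i.e.\ $u\notin\mathcal{V}_{p,\,k,\,\ell}(\mathbb{Z}G)$, which is precisely the claim. The argument is a direct specialisation and carries no serious obstacle; the one point deserving care is the centraliser identity of the first step, as it is exactly what turns the abstract alternative~(i) into the usable statement ``$u$ is central'', thereby letting non-centrality of $u$ rule it out. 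I would also double-check that the bookkeeping of generating commutators in the definition of $W$ genuinely collapses to $\langle u\rangle$ in this two-element case, mirroring the base step $W=\langle g_1\rangle$ in the proof of Theorem~\ref{dimension}.
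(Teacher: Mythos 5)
The paper omits the proof of this corollary, presenting it as an immediate consequence of the preceding theorem, and your argument---specialising to the pair $v_1=1$, $v_2=u$, observing that $W=\langle u\rangle$ and that the hypothesis $\mathcal{V}(\mathbb{Z}G)=\mathcal{V}_{\omega,\,\mathbb{Q}}(\mathbb{Z}G)$ collapses the centraliser in alternative (i) to $Z(\mathcal{V}(\mathbb{Z}G))$, so that non-centrality of $u$ forces alternative (ii)---is exactly the intended deduction, mirroring the paper's own corollary to Theorem~\ref{dimension}. Your reading of alternative (ii) as asserting that the difference $v_1^{-1}v_2=u$ is missed by some $\mathcal{V}_{p,\,k,\,\ell}(\mathbb{Z}G)$ (rather than the literal ``none of the $v_i$ belong,'' which would fail for $v_1=1$) is the correct interpretation, as the paper's final corollary confirms.
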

In case $G$ is a finite group, then clearly $\mathcal V_{\omega,\,\mathbb Q}(\mathbb ZG)\not=\{1\}.$ Thus we immediately have the following: 
\begin{cor}
	If  $G$ is a finite cut-group with $\Delta$-adic residue of $\mathcal{V}(\mathbb{Z}G)$ trivial, then  $G$ is nilpotent and every non-central unit is missed by some $\mathcal{V}_{p,\,k,\,l}(\mathbb{Z}G)$.
\end{cor}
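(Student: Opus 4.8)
The plan is to read this corollary as the finite-group specialization of the preceding corollary (the one with hypotheses $\mathcal{V}(\mathbb{Z}G)=\mathcal{V}_{\omega,\,\mathbb{Q}}(\mathbb{Z}G)$ and $\mathcal{V}_\omega(\mathbb{Z}G)=\{1\}$). The only thing that really needs verification is that its hypothesis $\mathcal{V}(\mathbb{Z}G)=\mathcal{V}_{\omega,\,\mathbb{Q}}(\mathbb{Z}G)$ holds automatically for every finite group; the nilpotence of $G$ I would obtain separately from the residual nilpotence of $\mathcal{V}(\mathbb{Z}G)$.

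First I would dispose of nilpotence. Since $\mathcal{V}_\omega(\mathbb{Z}G)=\{1\}$ and $\gamma_n(\mathcal{V}(\mathbb{Z}G))\subseteq\mathcal{V}_n(\mathbb{Z}G)$ for all $n$, the intersection $\bigcap_n\gamma_n(\mathcal{V}(\mathbb{Z}G))$ is trivial, so $\mathcal{V}(\mathbb{Z}G)$ is residually nilpotent. For $G$ finite this forces $G$ itself to be nilpotent, either directly by \cite{MW82}, or by quoting Theorem \ref{Finite_TrivialResidue}, which already pins $G$ down to an abelian group of exponent $6$ or a $p$-group, both nilpotent.

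The substantive step is the equality $\mathcal{V}(\mathbb{Z}G)=\mathcal{V}_{\omega,\,\mathbb{Q}}(\mathbb{Z}G)$ for finite $G$. Here I would use that $\mathbb{Q}G$ is semisimple (Maschke), so the augmentation ideal $\Delta_{\mathbb{Q}}(G)$ is a direct ring factor, cut out by the central idempotent $1-\frac{1}{|G|}\sum_{x\in G}x$; being a ring-with-identity summand, it is idempotent, so $\Delta_{\mathbb{Q}}^n(G)=\Delta_{\mathbb{Q}}(G)$ for every $n$ and hence $\Delta^{\omega}_{\mathbb{Q}}(G)=\Delta_{\mathbb{Q}}(G)$. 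Every normalized unit $u$ satisfies $u-1\in\Delta(G)\subseteq\Delta_{\mathbb{Q}}(G)=\Delta^{\omega}_{\mathbb{Q}}(G)$, whence $\mathcal{V}(\mathbb{Z}G)\subseteq 1+\Delta^{\omega}_{\mathbb{Q}}(G)$, that is $\mathcal{V}_{\omega,\,\mathbb{Q}}(\mathbb{Z}G)=\mathcal{V}(\mathbb{Z}G)$, as needed.

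With both hypotheses of the preceding corollary verified, its conclusion states exactly that every non-central unit of $\mathcal{V}(\mathbb{Z}G)$ is missed by some $\mathcal{V}_{p,\,k,\,\ell}(\mathbb{Z}G)$, completing the proof. Equivalently one may feed $v_1=1,\ v_2=u$ into the preceding theorem, where the associated subgroup is $W=\langle u\rangle$; because $\mathcal{V}_{\omega,\,\mathbb{Q}}(\mathbb{Z}G)=\mathcal{V}(\mathbb{Z}G)$, its centralizer $C_{\mathcal{V}}(\mathcal{V}_{\omega,\,\mathbb{Q}}(\mathbb{Z}G))$ collapses onto the centre $Z(\mathcal{V}(\mathbb{Z}G))$, so alternative (i) fails for non-central $u$ and alternative (ii) must hold. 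The cut-group hypothesis is what makes $Z(\mathcal{V}(\mathbb{Z}G))$ reduce to the central elements of $G$, rendering the phrase ``non-central unit'' unambiguous, but it is not otherwise used in the derivation. I expect no genuine obstacle: the whole content is the semisimplicity identification $\mathcal{V}_{\omega,\,\mathbb{Q}}(\mathbb{Z}G)=\mathcal{V}(\mathbb{Z}G)$, the one point deserving care being precisely that this equality is what collapses the centralizer of the $\mathbb{Q}$-residue onto the centre, thereby excluding the centralizing alternative for non-central units.
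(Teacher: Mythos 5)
Your proposal is correct and follows the same route the paper intends: the paper derives this corollary ``immediately'' from the preceding corollary after remarking that for finite $G$ one has $\mathcal V_{\omega,\,\mathbb Q}(\mathbb ZG)\neq\{1\}$, and your semisimplicity argument (idempotence of $\Delta_{\mathbb Q}(G)$, hence $\Delta_{\mathbb Q}^{\omega}(G)=\Delta_{\mathbb Q}(G)$ and $\mathcal V_{\omega,\,\mathbb Q}(\mathbb ZG)=\mathcal V(\mathbb ZG)$) supplies exactly the stronger equality that the preceding corollary's hypothesis actually requires, together with the standard deduction of nilpotence of $G$ from residual nilpotence of $\mathcal V(\mathbb ZG)$. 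Your observation that the cut hypothesis serves only to identify the centre of $\mathcal V(\mathbb ZG)$ with the central group elements is also consistent with the paper's usage.
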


\par\vspace{.5cm}
\centerline{\bf Acknowledgement}\par\vspace{.25cm}\noindent
The second author is thankful to Ashoka University, Sonipat, for making available their facilities.

\providecommand{\bysame}{\leavevmode\hbox to3em{\hrulefill}\thinspace}
\providecommand{\MR}{\relax\ifhmode\unskip\space\fi MR }
\providecommand{\MRhref}[2]{%
	\href{http://www.ams.org/mathscinet-getitem?mr=#1}{#2}
}
\providecommand{\href}[2]{#2}


\begin{thebibliography}{BCJM18}
	
	\bibitem[B{\"a}c18]{Bac18}
	A.~B{\"a}chle, \emph{Integral group rings of solvable groups with trivial
		central units}, Forum Math. \textbf{30} (2018), no.~4, 845--855.
	
	\bibitem[BCJM18]{BCJM}
	A.~B{\"a}chle, M.~Caicedo, E.~Jespers, and S.~Maheshwary, \emph{Global and
		local properties of finite groups with only finitely many central units in
		their integral group ring}, 11 pages, submitted.
	
	\bibitem[BMP17]{BMP17}
	G.~K. Bakshi, S.~Maheshwary, and I.~B.~S. Passi, \emph{{Integral group rings
			with all central units trivial}}, J. Pure Appl. Algebra \textbf{221} (2017),
	no.~8, 1955--1965.
	
	\bibitem[BMP19]{BMP19}
	\bysame, \emph{{Group rings and the RS-property}}, Comm. Algebra \textbf{47}
	(2019), no.~3, 969--977.
	
	\bibitem[Gup87]{GuP87}
	N.~Gupta, \emph{Free group rings}, Contemporary Mathematics, Vol.~66, Ameri.
	Math. Soc., 1987.
	
	\bibitem[Lic77]{Lic77}
	A.~I. Lichtman, \emph{The residual nilpotency of the augmentation ideal and the
		residual nilpotency of some classes of groups}, Israel Journal of Mathematics
	\textbf{26} (1977), no.~3, 276--293.
	
	\bibitem[Mah18]{Mah18}
	S.~Maheshwary, \emph{Integral {G}roup {R}ings {W}ith {A}ll {C}entral {U}nits
		{T}rivial: {S}olvable {G}roups}, Indian J. Pure Appl. Math. \textbf{49}
	(2018), no.~1, 169--175.
	
	\bibitem[Mal49]{Mal49}
	A.~I. Malcev, \emph{Generalized nilpotent algebras and their associated
		groups}, Mat. Sbornik N.S. \textbf{25(67)} (1949), 347--366.
	
	\bibitem[Mil76]{Mil76}
	C.~P. Milies, \emph{Integral group rings with nilpotent unit groups}, Canad. J.
	Math. \textbf{28} (1976), no.~5, 954--960.
	
	\bibitem[MP09]{MP09}
	R.~Mikhailov and I.~B.~S. Passi, \emph{Lower central and dimension series of
		groups}, Lecture Notes in Mathematics, Vol. 1952, Springer, Berlin, 2009.
	
	\bibitem[MW82]{MW82}
	I.~Musson and A.~Weiss, \emph{Integral group rings with residually nilpotent
		unit groups}, Arch. Math. (Basel) \textbf{38} (1982), no.~6, 514--530.
	
	\bibitem[Pas79]{Pas79}
	I.~B.~S. Passi, \emph{Group rings and their augmentation ideals}, Lecture Notes
	in Mathematics, Vol. 715, Springer, Berlin, 1979.
	
	\bibitem[Seh75]{Seh75}
	S.~K. Sehgal, \emph{Nilpotent elements in group rings}, Manuscripta Math.
	\textbf{15} (1975), 65--80.
	
	\bibitem[Seh78]{Seh78}
	\bysame, \emph{{Topics in group rings}}, {Monographs and Textbooks in Pure and
		Applied Math.}, vol.~50, Marcel Dekker, Inc., New York, 1978.
	
	\bibitem[Seh93]{Seh93}
	\bysame, \emph{{Units in integral group rings}}, {Pitman Monographs and \linebreak Surveys
		in Pure and Applied Mathematics}, vol.~69, Longman \linebreak Scientific \& Technical,
	Harlow, 1993, With an appendix by Al Weiss.
	
	\bibitem[Tre19]{Tre19}
	S.~Trefethen, \emph{Non-{A}belian composition factors of finite groups with the
		{CUT}-property}, J. Algebra \textbf{522} (2019), 236--242.
	
\end{thebibliography}
\end{document}